\newtheorem{thm}{Theorem}       
\newtheorem{lemma}{Lemma}       \newtheorem{coro}{Corollary}
\let\paragraph\subsection
\title{A Reeb sphere theorem in graph theory} \author{Oliver Knill} \date{3/24/2019}
\address{Department of Mathematics \\ Harvard University \\ Cambridge, MA, 02138 }
\subjclass{Primary:  57M15, 68R10, 53A55}
\begin{document}

\begin{abstract}
We prove a Reeb sphere theorem for finite simple graphs. 
The result bridges two different definitions of spheres in graph theory.
We also reformulate Morse conditions in terms of the center manifolds, 
the level surface graphs $\{ f= f(x) \}$ in the unit sphere $S(x)$.  
In the Morse case these graphs are either spheres, the empty graph or 
the product of two spheres.
\end{abstract}
\maketitle

\section{Introduction}

\paragraph{}
It was Herman Weyl \cite{Weyl1925} who first asked to look for a combinatorial finite 
analogue of a $d$-dimensional Euclidean sphere: how can one, in a finite manner, without 
assuming the existence of infinity, capture results in differential topology?
Any referral to the continuum like using triangulations is not an option because it builds on the 
Euclidean notion of space. Spheres are important because spheres allow to compensate for the 
absence of tangent spaces as the unit sphere of a point plays the role of the tangent space.
Using sphere geometry one can model subspaces of the tangent spaces as
intersections of spheres. 

\paragraph{}
With a notion of sphere, one has so a notion of a locally Euclidean space
and can do geometry on finite set of sets mirroring results almost verbatim
from the continuum. We will use functions $f$ on graphs which are real valued 
but it should be clear from the discussion that one could use functions which 
take values in a finite set like $\{1, \dots, n\}$ to make sure that the 
infinity axiom is never used. We insist in particular to avoid any
geometric realizations to prove things. 

\paragraph{}
In the 1990ies, two approaches have emerged which answer the question of Weyl: 
``digital topology" spearheaded by Alexander Evako uses an induction on 
spheres and Whitehead homotopy (reformulated purely combinatorially) 
and defines a sphere as a structure in which every unit sphere is a by 1 
lower dimensional sphere and where puncturing the sphere renders the object 
contractible (see e.g. \cite{Evako2013}).
A second, more analytic approach is ``discrete Morse
theory" was pioneered by Robin Forman \cite{forman95,Forman2002}. 
It deals with functions and critical points. In that frame work, 
the punctured collapse is replaced by the existence of a function with exactly two critical points.
This approach is well suited to model discrete analogues of smooth situations and
gives a Morse theory which is equivalent to the continuum. The theory still refers
often to the continuum although. 

\paragraph{}
While mending the two approaches is maybe a bit ``obvious", it is still necessary to have 
this formulated precisely. The exercise can be solved comfortably in the language of 
graphs. There is not much of a loss of generality when using graph theory rather than 
working with simplicial complexes, because any simplicial complex 
$G$ has a Barycentric refinement $G'$ that is the Whitney complex of a graph. 
A finite abstract simplicial complex is then declared to be a $d$-sphere if and only if its 
Barycentric refinement is a $d$-sphere as a graph. 
Equipped with more general simplicial complexes, graphs become powerful structures to work with. 
Their advantage is that they are more intuitive than finite sets of sets. 
It is quite astonishing what kind of mathematics one can reach with this 
language. Some is summarized in \cite{AmazingWorld}.

\paragraph{}
For the author of this note, who started to explore geometric topics in graph theory in
\cite{elemente11,cherngaussbonnet}, the analytic approach 
first appeared to be more natural, especially in the context of the 
discrete Poincar\'e-Hopf theorem \cite{poincarehopf}, which works for a general graph and where a vertex $x$
is a critical point if $S_f^-(x)$, the graph generated by $\{ y \in S(x) \; | \; f(y)<f(x) \}$ 
is not contractible and where the index is $i(x)=1-\chi(S_f^{-}(x))$ with Euler characteristic $\chi$. 
In that frame work, spheres can be defined as the class of graphs which admit a function 
with exactly two critical points, while contractible graphs are characterized allowing
functions with one critical points. Later, when looking at coloring problems, the homotopy definition 
became attractive. The inductive definition is well tailored for proofs like the 
Jordan-Brower-Schoenfliess theorem \cite{KnillJordan}. Simple facts like that the 
Euler characteristic of a $d$-sphere $S^d$ is $\chi(S^d) = 1+(-1)^d$ have there straightforward
proofs because the definitions allow for short recursive arguments.

\paragraph{}
There is an other reason to write down the current note:
when coloring graphs using topology \cite{knillgraphcoloring,knillgraphcoloring2,knillgraphcoloring3}, 
it is necessary to build up a $3$-ball as an increasing sequence of Eulerian $3$-balls $G_n$, 
where it is pivotal that in every step, the boundary $G_n$ is a $2$-sphere and where during the 
build-up, no topology changes happen. While programming this on a computer, 
we needed to do refinements which were at first not expected. The current paper illustrates
a bit the difficulty when working with chosen inductive definitions. As they are detached
from any Euclidean embedding, there are things which are overlooked at first. The actual 
aim then is to color planar graphs of $n$ vertices constructively and deterministically in $O(n)$ 
time possibly with explicit constants for the $O(n)$ part. 

\paragraph{}
While working with the recursive definition of spheres is more
elegant for general proofs, we need to think in Morse pictures when building algorithms. We will see in 
particular that the Reeb sphere theorem shows that any $3$-ball can be ``foliated" into $2$-balls 
(where neighboring leaves can intersect). This slicing requires the existence of a function $f$ on the 
$d$-ball which has exactly two critical points, a maximum and minimum. 
This foliation is not possible in the graph itself.
But fortunately, the notion of level surfaces $\{ f=c \}$ in a $d$-graph 
is nice in the discrete because there are never singularities if $c$ is not in the range $f(V)$ of $f$: 
the level surface is always a $(d-1)$-graph \cite{KnillSard}, so that only vertices can be critical points.
This mirrors the classical Sard theorem, where for almost all $c$ in the target space of a smooth function $f$
on a manifold $M$, the inverse $f^{-1}(c)$ is either empty or a discrete manifold. 

\paragraph{}
All graphs $G$ considered here are finite simple and all functions $f$ are {\bf colorings}, meaning that
they are locally injective functions on the vertex set $V$ of the graph. This especially implies
that the value $c=f(x)$ is not in the range $f(S(x))$ for any 
unit sphere $S(x)$. A $d$-graph is now a graph for which every unit sphere $S(x)$ is a $(d-1)$-sphere and 
a $d$-sphere is a $d$-graph which when punctured becomes contractible. A graph is defined to be contractible
if there exists $x$ such that $G-x$ is contractible. The inductive definitions are booted up with the 
assumption that the empty graph is the $(-1)$-sphere and the $1$-point graph $K_1$ is the $0$-ball. 
A graph is declared to be a {\bf $d$-ball} if it can be written as a punctured $d$-sphere. These
definitions allow to switch quickly from spheres to balls, by puncturing a sphere $G$ to get a ball 
$G-x$ or to build a cone extension over the boundary of a ball to get a sphere. 

\paragraph{}
In this paper, we declare a vertex $x$ in a $d$-graph $G=(V,E)$ to be a 
{\bf critical point} of $f: V \to \mathbb{R}$ if at least one of the sets 
$S_f^-(x) = \{ y \; | \; f(y) < f(x) \}$ or $S_f^+(x)=\{ y \; | \; f(y)>f(x) \}$ is not contractible.
This is a slightly more inclusive definition than asking that $S_f^-(x)$ is not contractible. It leads
to more critical points. In order not to be ambiguous, we might call points for which $S_f^-(x)$ is not
contractible, a {\bf Poincar\'e-Hopf critical point} or {\bf one sided critical point}. For $d$-graphs,
the two definitions will agree. The main reason to consider the symmetric modification 
is the situation of $d$-graphs with boundary: a $d$-ball admits a function with exactly $2$ critical 
points (the maximum and minimum). The one-sided Poincar\'e-Hopf definition allows for a function 
on a $d$-ball with exactly one critical point, the minimum (by definition as a $d$-ball is defined 
to be contractible). 

\paragraph{}
We will just see that for $d$-graphs, the property that $S_f^+(x)$ is contractible is 
equivalent to that both $S_f^{\pm}(x)$ are both $(d-1)$-balls and that the center manifold
$B_f(x)=\{ f = f(x) \}$ is a $(d-2)$-sphere. So, for $d$-graphs which by definition 
have no boundary, the contractibility of both the stable and unstable sphere
is equivalent to the one-sided definition in which only the stable sphere $S^-_f$ is required to 
be contractible. Already for $d$-balls, the two definitions are no more equivalent: 
a $d$-ball has at least two critical points (the maximum and minimum) 
in the above sense. In the one sided definition however there is 
a function $f$ with exactly one critical point, the minimum. 
The combinatorial Reeb statements formulated here are not deep. 
They certainly do not have the subtlety of the {\bf Jordan-Brower-Schoenfliess theorem}
which covers an opposite angle to the story and which requires to look at global properties.
We appear to need Jordan-Brower-Schoenfliess when characterizing center manifolds of regular
points however. 

\paragraph{}
The discrete Jordan-Brower-Schoenfliess theorem \cite{KnillJordan} 
assures that a $(d-1)$ sphere $H$ in a $d$-sphere $G$ divides $G$ into two parts which 
are both balls. This is a reverse of the statement that if $G$ is partitioned into two 
balls, the interface is a sphere. The later is harder to prove than the discrete Reeb result. 
In Reeb, we assume contractibility on both sides and deduce from this that they must be balls 
with a common sphere boundary. The Jordan-Brower-Schoenfliess is the reverse: it starts with 
an embedded $(d-1)$-sphere in a $d$-sphere and concludes that this splits the sphere into 
two disjoint balls which in particular are contractible. The {\bf Alexander horned sphere} 
illustrates that the classical topological case can be tricky: there is a
topological $2$-sphere embedded in a $3$-sphere, such that only one side is simply connected. 
The discrete case is more like the piecewise linear or smooth situation in the continuum. 

\paragraph{}
We can reformulate the above given notion of critical point also using the 
{\bf center sub manifold} $B_f(x) = \{f = f(x) \}$ of $S(x)$ which is always a
$(d-2)$-graph by the local injectivity assumption and discrete Sard \cite{KnillSard}.
A vertex $x$ is now a critical point if and only if $B_f(x)$ is not a sphere. 
The center manifold $B_f(x)$ allows to comfortably define what a 
discrete analogue of a {\bf Morse function} is: a function on a $d$-graph is called
a {\bf Morse function}, if every $B_f(x)$ is either a sphere or a Cartesian product of two spheres
or then is empty. (The empty case belongs also to the cases of critical points. If you like, it is the
product of two spheres too, where one is the empty graph and the other $S(x)$. We prefer however to leave the
product with a graph with the empty graph to be undefined.)
The former case happens for {\bf regular points} while the later is the situation at a
{\bf critical point}. In the later case, the sphere itself $S(x)$ is the {\bf join} of two spheres
and has dimension $d-1$ while the Cartesian product $B_f(x)$ has dimension $d-2$ or is empty. 

\paragraph{}
The center manifold $B_f(x)$ is useful in general for any finite simple graph but it is especially appealing
when looking at curvature \cite{indexformula}. 
Curvature is an expectation of Poincar\'e-Hopf indices \cite{indexexpectation}
where the probability space is a set of functions like the finite set of all colorings with a minimal
number of colors. Since we can write such an index in terms of
an Euler characteristic of a center manifold, the Euler characteristic is expressible as an average of 
Euler characteristis of co-dimension two surfaces. By Gauss-Bonnet,
Euler characteristic can then be written as an expectation of curvatures 
of such surfaces. 

\paragraph{}
The fact that for even-dimensional $d$-graphs, 
the Euler characteristic is an expectation of curvatures of two-dimensional graphs, whose curvature can be
interpreted as {\bf sectional curvatures} brings Euler characteristic close to the Hilbert action in 
relativity. I shows that one can see Euler characteristic as a
{\bf quantized version} of the Hilbert action \cite{KnillFunctional}.
Now, it depends how the variational problem is 
set up in the discrete but one can see $d$-graphs as critical points of 
Euler characteristic: doing a variation like removing
a point does not change the Euler characteristic or makes it smaller for even dimensional 
manifolds and larger for odd dimensional manifolds. 

\paragraph{}
The functional $\chi$ in some sense can explain why we observe space which appears Euclidean. 
A typical random disorganized network is not a critical point of the Euler characteristic. 
There are other arguments for Euclidean structures like that the Barycentric refinement 
operation grows Euclidean structure \cite{KnillBarycentric2}: each simplex becomes a ball and becomes
so part of Euclidean space already after one Barycentric refinement. For $d$-graphs,
the functional $\chi(G) = \sum_x \omega(x)$ 
agrees with Wu characteristic $\omega(G) = \sum_{x \sim y} \omega(x) \omega(y)$ 
\cite{Wu1953,valuation,CohomologyWuCharacteristic}, which measures the pair-interaction of 
intersecting ($\sim$) simplices $x,y$ in a simplicial complex $G$. For $d$-graphs with boundary we have
$\omega(G) = \chi(G)-\chi(\delta G)$, where $\delta G$ is the boundary $(d-1)$ complex.

\begin{figure}[!htpb]
\scalebox{0.57}{\includegraphics{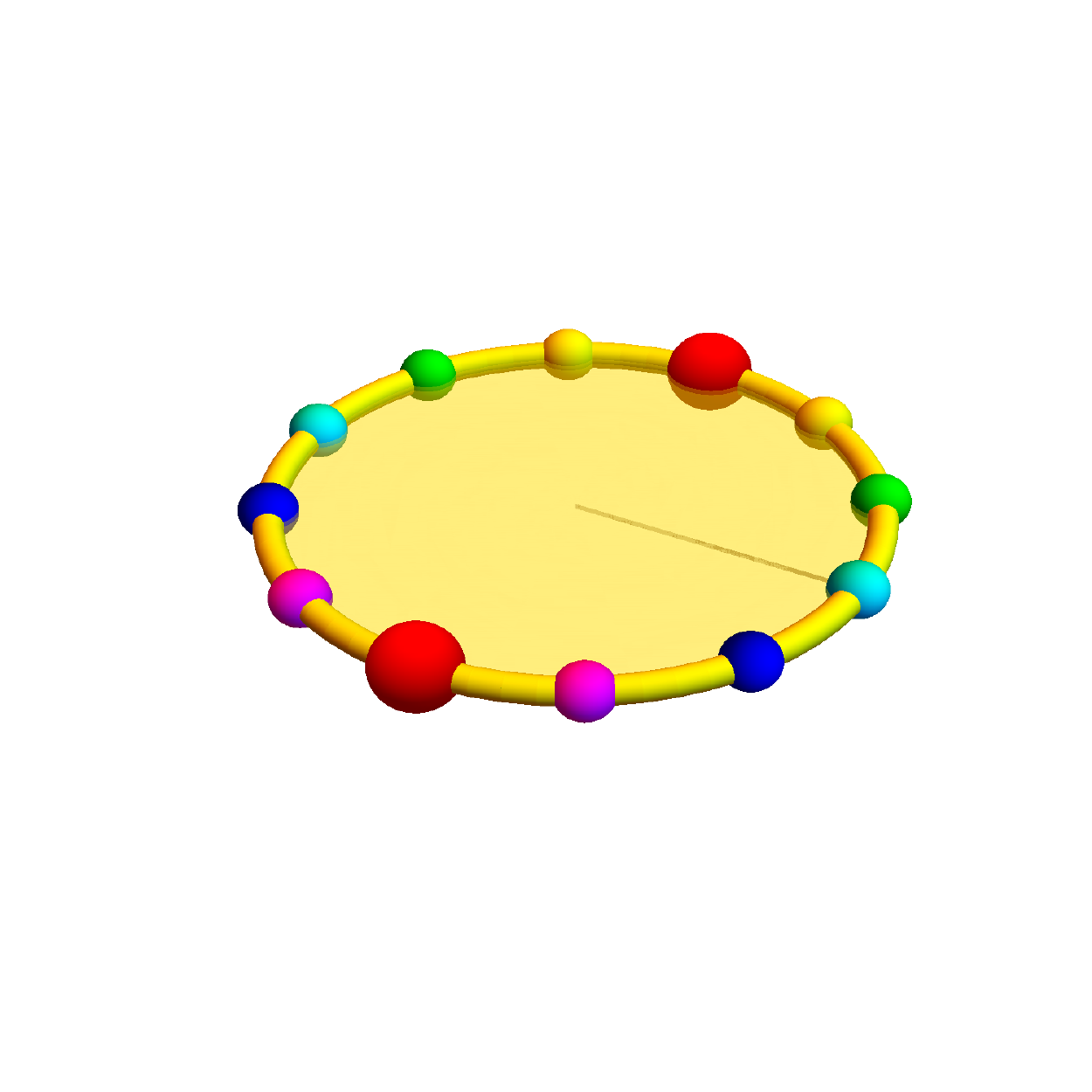}}
\scalebox{0.57}{\includegraphics{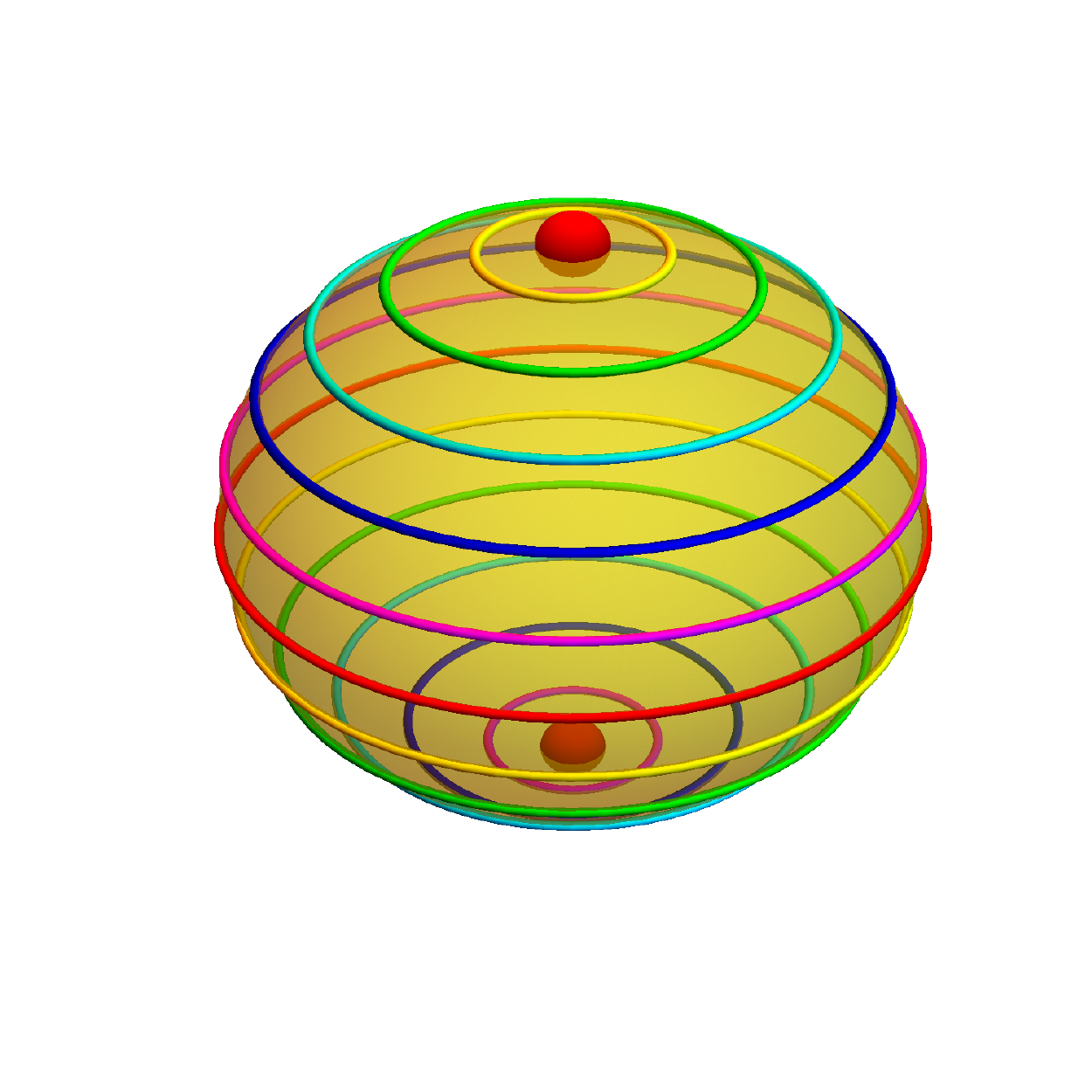}}
\label{reeb}
\caption{
The discrete Reeb theorem for graphs characterizes $d$-spheres as
$d$-graphs admitting a function with exactly two critical points. 
A $d$-sphere can be foliated into $(d-1)$-spheres together with the 
critical points. 
}
\end{figure}

\paragraph{}
We will study the variational problem for the functional $\chi$ more elsewhere and especially
look for conditions which are needed for $\chi$
to be extremal. One condition is that the Green function $g(x,x)=L^{-1}(x,x) = 1-\chi(S(x))$ entries have
a definite sign. This is satisfied for $d$-graphs where all unit spheres $S(x)$ are $(d-1)$ spheres of
the same dimension. But $\chi$ has extrema also some varieties as there are examples like Bouquets
of 2-spheres, where critical points have Green function entries with definite sign.

\begin{figure}[!htpb]
\scalebox{0.57}{\includegraphics{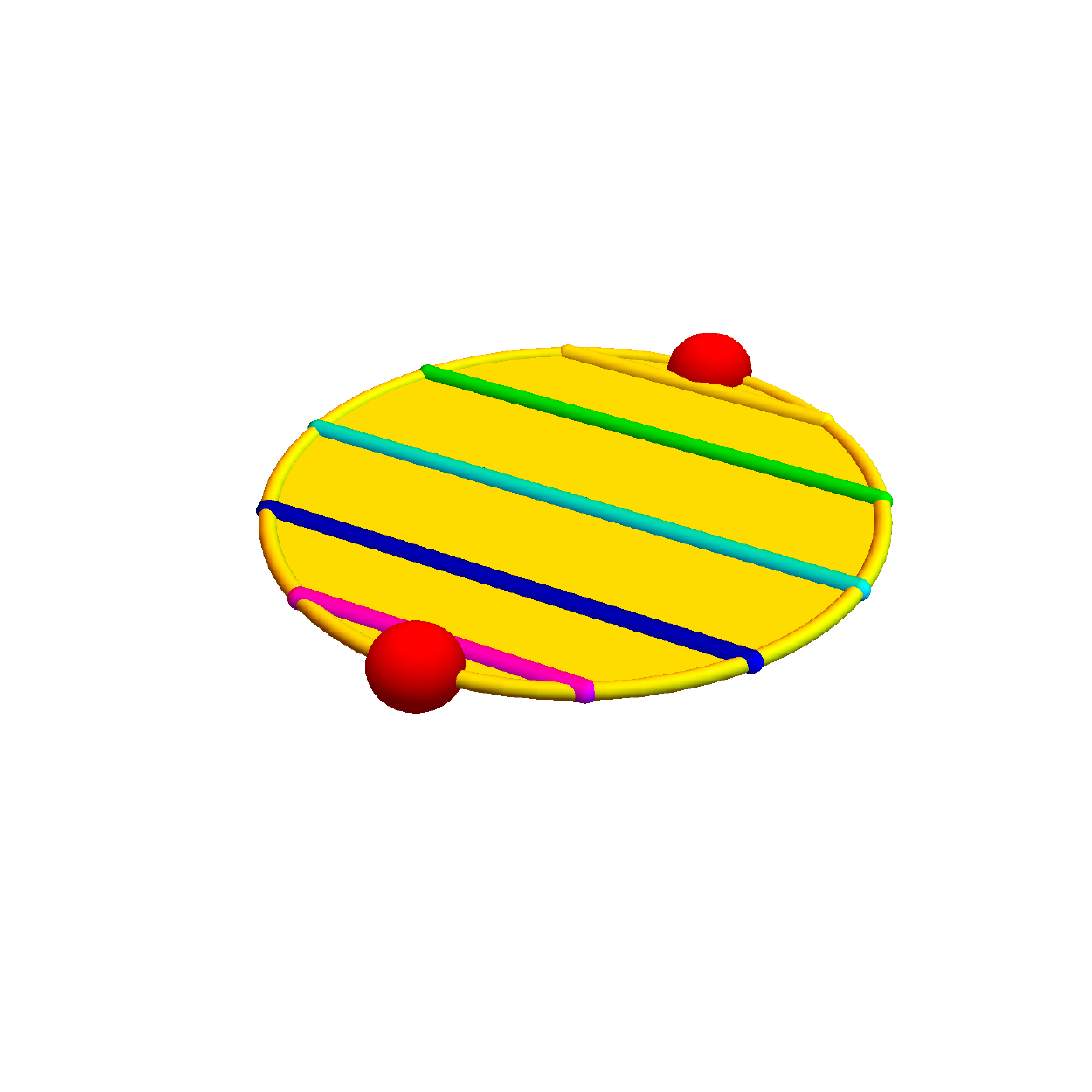}}
\scalebox{0.57}{\includegraphics{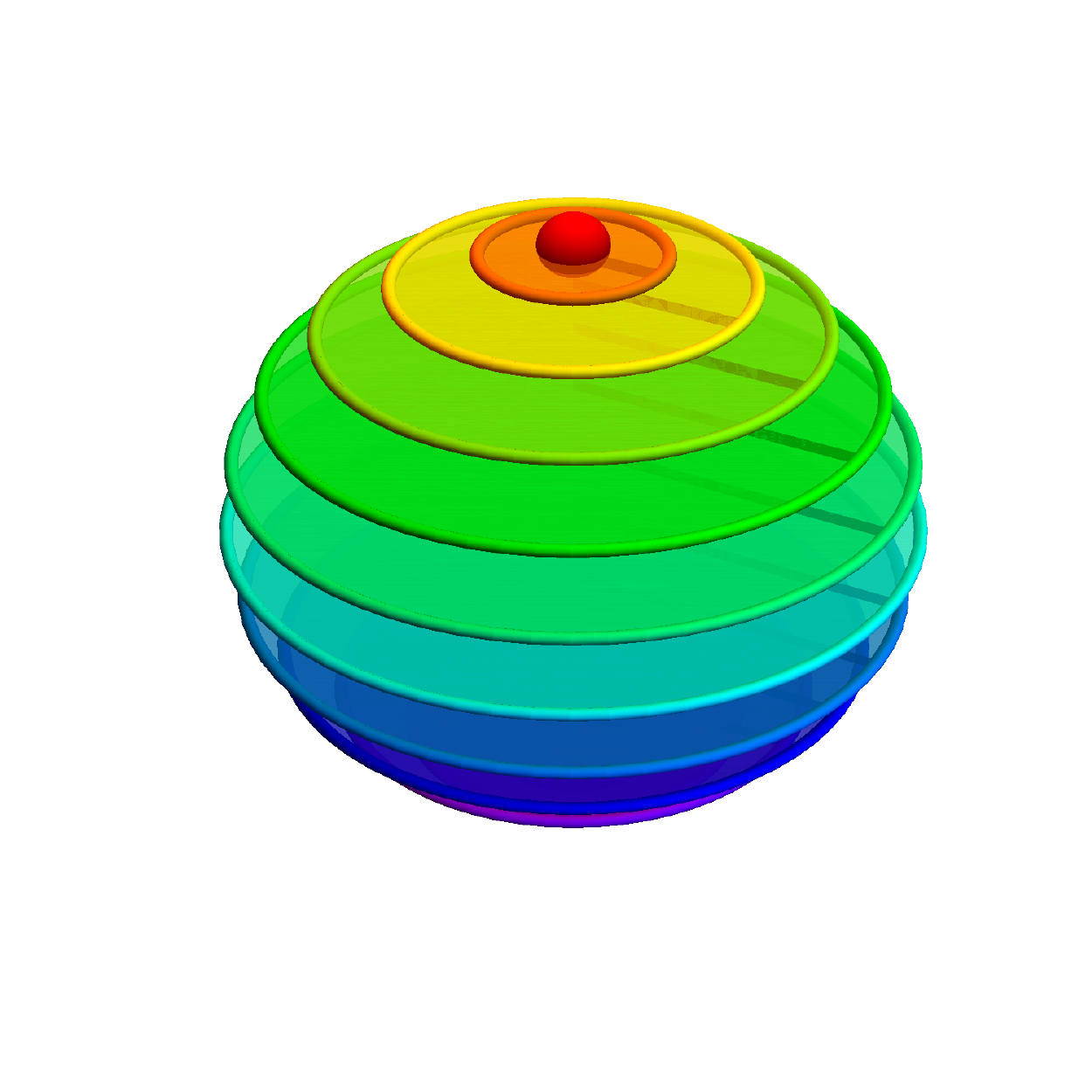}}
\label{reeb}
\caption{
The Reeb theorem for $d$-graphs with boundary 
characterizes $d$-balls as $d$-graphs with boundary 
which admit a function with exactly two critical points. 
A $d$-ball can be foliated into $(d-1)$-balls closed of
by the two critical points.
}
\end{figure}

\section{Critical points}

\paragraph{}
Given an arbitrary finite simple graph $G=(V,E)$ and a locally injective function $f:V \to \mathbb{R}$,
we have at every point $x \in V$ a graph $B_f(x) = \{ f=f(x) \} \subset S(x)'$ (\cite{KnillSard})
which is generated by the simplices in the Barycentric refinement $S(x)'$ of $S(x)$, 
on which $f-f(x)$ takes both positive and negative signs. This {\bf center manifold} $B_f(x)$
divides the sphere $S(x)$ into the two sets $S_f^-(x) =  \{ f(y)<f(x) \}$
and $S_f^+(x)=\{ y \in S(x) \; | \; f>f(x) \}$. We call $x$ a {\bf critical point} of $f$ if
one or both of the sets $S_f^{\pm}(x)$ is not contractible. For a regular point
the {\bf Poincar\'e-Hopf indices} $i_f^{\pm}(x) = 1-\chi(S_f^{\pm}(x))$ are zero 
but having a zero index does not guarantee dealing with a regular point.

\paragraph{}
The {\bf symmetric index} $j_f(x)=(i_f(x)+i_{-f}(x))/2$ can for
odd $d$ be written as $j_f(x)=-\chi(B_f(x))/2$ which is zero because the 
Euler characteristic of an odd-dimensional d-graph is zero and $B_f(x)$ is a $(d-2)$-dimensional graph.
The symmetric version of the usual index $i_f(x)$ \cite{poincarehopf} has appeared in
\cite{indexformula} and the definition works for general finite simple graphs. We look at it here
for $d$-graphs, which are locally Euclidean graphs, discrete versions of manifolds.

\paragraph{}
By definition, a contractible graph is characterized by the fact that there is a function $f$
with exactly one Poincar\'e-Hopf critical point, a point where $S_f^-(x)$ is not contractible.
The index $i_f(x) = 1-\chi(S_f^-(x))$ can still be zero for such a critical point.
With the more symmetric version of critical point, asking that either $S_f^-(x)$ or
$S_f^+(x)$ is not contractile, a graph always admits at least two critical points,
the maximum and minimum. From the Lusternik-Schnirelmann point of view \cite{josellisknill}, 
the Poincar\'e-Hopf critical point is more reasonable as it allows to give sharper inequalities 
which match the inequalities in the continuum: cup-length is a lower and the minimal number of
critical points an upper bound for the Lusternik-Schnirelmann-category. 

\paragraph{}
When using the notion of $d$-graph which is inductively defined through the property of having
unit spheres which are spheres, the concept of a ``level surface" is particularly nice. For any value $c$
different from the range of a locally injective function $f$, the graph $\{ f = c \}$ is always
a $(d-1)$-graph. This can be rephrased that all topology changes happen at the vertices of the
graph. Unlike in algebraic geometry, where one has to deal with singularities
of varieties given by the zero locus of a finite set of polynomials, we do not have to worry
about singularities away from the range of $f$ which in the finite case is a finite set.
New in the ``discrete Sard" result \cite{KnillSard} is that there is non-commutativity 
in that $\{ f=c, g=d \}$ first has to build $\{ f=c \}$, then get $g$ on that surface.
Building $\{ g=d \}$ and there look at $f=c$ is in general different. 

\paragraph{}
We will see here that for $d$-graphs which are by definition graphs without boundary,
the notion of {\bf Poincar\'e-Hopf critical point} and {\bf symmetric critical point}
(asking that at least one of the two graphs $S_f^{\pm}(x)$ are not contractible) are the same.
It allows to give a sphere-based definition what a Morse function is.
In the discrete, we don't have derivatives and Hessian matrices, not even notions like straight
lines or geodesics so that all geometry has to be defined within a type of 
{\bf sphere geometry}.

\section{The Reeb theorem}

\paragraph{}
Classically, the Reeb sphere theorem from 1952 \cite{Reeb1952}  (referred to and generalized in
\cite{McAuley} who generalizes and improves on results by Milnor and Rosen) 
shows that in the class of closed compact smooth manifolds, the ones which admit a function 
with exactly two critical points must be sphere. The level curves
of such a function then defines a foliation of the sphere for which the leaves
are smaller dimensional spheres or then degenerate to points at the two critical points. 
The following discrete version is commented on more in the last section. 

\begin{thm}[Discrete Reeb]
For $d \geq 0$, a $d$-graph $G$ admits a function with exactly two critical points 
if and only if it is a $d$-sphere.
\end{thm}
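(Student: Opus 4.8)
The plan is to run a discrete Morse-type filtration in both directions, using only the inductive definitions of contractible graph, $d$-ball and $d$-sphere together with the discrete Sard property that every level graph $\{f=c\}$ with $c\notin f(V)$ is a $(d-1)$-graph. Two facts proved earlier will be used as black boxes: first, the equivalence valid for $d$-graphs that a vertex $x$ is regular iff $S_f^-(x)$ is contractible iff $S_f^+(x)$ is contractible iff the center manifold $B_f(x)$ is a $(d-2)$-sphere; and second, the trivial remark that the global minimum $p$ and global maximum $q$ of $f$ are always critical, since $S_f^-(p)=S_f^+(q)$ is the empty graph, which is the $(-1)$-sphere and hence not contractible. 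Having exactly two critical points therefore forces $p$ and $q$ to be the only critical vertices, and by local injectivity each to be the unique minimum, resp.\ maximum, with every other vertex regular.

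For the ``only if'' direction, suppose the $d$-graph $G=(V,E)$ carries a coloring $f$ with critical set $\{p,q\}$. I would order $V=\{v_1,\dots,v_n\}$ so that $f(v_1)\le\dots\le f(v_n)$ with $v_1=p$ and $v_n=q$; ties cause no trouble because equal-valued vertices are non-adjacent. Let $G_k$ be the subgraph of $G$ induced on $\{v_1,\dots,v_k\}$. One shows by induction on $k$ that $G_k$ is contractible for $k<n$: indeed $G_1=K_1$, and for $2\le k\le n-1$ the link of $v_k$ in $G_k$ equals $S_f^-(v_k)$, which is contractible because $v_k$ is regular, so $G_k=G_{k-1}\cup\{v_k\}$ is contractible straight from the recursive definition (delete $v_k$ to recover the contractible $G_{k-1}$, and $v_k$ has contractible link in $G_k$). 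Then $G-q=G_{n-1}$ is contractible, so the $d$-graph $G$ becomes contractible upon puncturing at $q$, and hence $G$ is a $d$-sphere by definition. One may read the last step as attaching the cone $q*S(q)$ over the boundary sphere $S(q)$ of the ball $G_{n-1}$.

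For the ``if'' direction, let $G$ be a $d$-sphere and fix any vertex $q$, so $G-q$ is a $d$-ball, hence contractible. Unwinding the recursive definition of contractibility supplies an ordering $v_1,\dots,v_{n-1}$ of the vertices of $G-q$ with $\{v_1\}=K_1$ such that, for every $2\le k\le n-1$, the link of $v_k$ in the subgraph induced on $\{v_1,\dots,v_k\}$ is contractible. I would then take the globally injective (hence locally injective) coloring $f$ given by $f(v_k)=k$ for $k\le n-1$ and $f(q)=n$. Now $v_1$ is the minimum and $q$ the maximum, so both are critical; and for $2\le k\le n-1$ the set $S_f^-(v_k)$ is exactly the contractible link above, so by the $d$-graph equivalence $v_k$ is regular, with $S_f^+(v_k)$ automatically contractible. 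Hence $f$ has precisely the two critical points $v_1$ and $q$. The base case $d=0$ needs nothing: $S^0$ is two non-adjacent vertices, any injective $f$ makes both critical and creates no others.

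The two filtrations are routine and are mirror images of each other, building $G$ up one vertex at a time through contractible graphs and closing off with a cone over a unit sphere. The delicate point, which I expect to be the main obstacle, is the $d$-graph equivalence used in the ``if'' direction: to know that a contractible side $S_f^-(x)$ of the $(d-1)$-sphere $S(x)$ is actually a $(d-1)$-ball with $(d-2)$-sphere boundary $B_f(x)$, and that the complementary side $S_f^+(x)$ is then also a $(d-1)$-ball. This is precisely where the discrete Jordan-Brower-Schoenfliess theorem is needed, as flagged in the introduction; granting it, the rest is bookkeeping with the inductive definitions and discrete Sard.
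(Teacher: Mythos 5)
Your proof is correct and follows essentially the same route as the paper: in one direction you build $f$ from a contraction ordering of the punctured sphere, and in the other you sweep the vertices in order of increasing $f$-values to exhibit $G-q$ as contractible, then conclude $G$ is a $d$-sphere from the puncture definition. The only real difference is bookkeeping: you put the puncture point at the maximum and explicitly invoke the equivalence of contractibility of $S_f^-(x)$ and $S_f^+(x)$ (stated later in the paper but proved independently of Reeb, so there is no circularity), a point the paper's own argument treats only tersely with ``continue like that''.
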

\begin{proof}
a) If a graph $G=G_0$ is a $d$-sphere, there exists $x_0$ such that $G_1=G-x_0$ is contractible. 
Define $f(x_0)=0$. As $G_1$ is contractible, there exists $x_1$ such that 
$G_2=G_1-x_1$ is contractible and $S(x_1)$ is contractible. Define $f(x_1)=1$. 
In $G$, we have $S^-_f(x_1)$ and $S^+_f(x_1) = \{ x_0 \}$ are contractible and
$B_f(x_1)=S(x_0) \cap S(x_1)$ is a sphere. Continue like that to see that
all points are regular points except for the last point $x_{n-1}$, where we declare 
$f(x_{n-1})=n-1$, and where $S_-(x)$ is empty. The function $f$ has exactly $2$ 
critical points $x_0$ and $x_{n-1}$.  \\
b) Assume now that $G=(V,E)$ is a $d$-graph and that a function $f: V \to \mathbb{R}$ is given 
with exactly two critical points. Define $G_0=G$. Take the first critical point $x_0$ of $f$ to get
$G+1=G-x_0$. Now we can take $x_1$ away from $G_1$ to get $G_2=G_1-x_1$ which 
is contractible. Continue like this to get a sequence of graphs $G_k$ for which 
$G_{k+1}=G_k-x_k$. As each $x_k$ with $k=2, \dots, n-2$ is  not a critical point, every 
$G_k$ is contractible. Eventually, the graph $G_{n-1}$ is a one-point graph containing one
point $x_{n-1}$. As $x_{n-2}$ was not a critical point, $G_{n-1}=G_n +_{S^-(x_{n-1})} x_{n-1}$ 
is contractible etc so that $G_1$ is contractible which by definition checks that $G=G_0$ 
is a $d$-sphere. 
\end{proof}

\paragraph{}
The following lemma shows that every contractible sub-graph $K$
of a $d$-graph $G$ defines a $d$-ball in the Barycentric refinement.
Figure~\ref{contractibletoball} illustrates this. The graph $K$
can be rather arbitrary, as long as it is contractible. 
It can be a tree for example. The ball $B$ defined by it 
is the union $B=\sum_{x \in V(K)} B(\{x\})$, where $B(\{x\})$ is the
unit ball of the vertex $\{x\}$ in the Barycentric refinement $G'$ of $G$.
The union of these balls is a ball. We formulate it using a function $f$
which is negative on $K$ and positive outside $K$. The graph
$\{f \leq c\}$ is the set of simplices $x$ for which $f$ is either constant
negative on $x=\{x_1, \dots x_k\}$ 
or then takes both positive and negative values on $x$.

\begin{lemma}
Given a $d$-graph $G$ and a contractible sub-graph $K$.
Define a function $f$ which is negative on $K$ and positive everywhere else.
Then, the surface $\{ f=c \}$ is a sphere bounding the ball 
$B=\{ f \leq c \}$ in $G'$.
\end{lemma}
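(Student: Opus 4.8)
The plan is to establish both halves of the statement ($\{f=c\}$ is a $(d-1)$-sphere and $B=\{f\le c\}$ is the $d$-ball it bounds) by a double induction: an outer induction on the dimension $d$ and, for fixed $d$, an inner induction on the number $n=|V(K)|$ of vertices of $K$. Since the complex $\{f\le c\}$ depends only on $V(K)$ (a simplex $\sigma$ of $G$ is constant-negative or sign-changing for $f$ exactly when $\sigma\cap V(K)\ne\emptyset$), I would first take $K$ to be the subgraph of $G$ induced on $V(K)$ and fix a value $c$ with $\max_{V(K)}f<c<\min_{V\setminus V(K)}f$, which exists because the left side is negative and the right side positive. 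With this $c$, a vertex $\sigma$ of $G'$ lies in $B$ iff $\sigma\cap V(K)\ne\emptyset$, and in $\{f=c\}$ iff in addition $\sigma\not\subseteq V(K)$. In particular $B(\{x\})$, the unit ball of the vertex $\{x\}$ in $G'$, is exactly the full subcomplex of $G'$ on the simplices of $G$ containing $x$, and since every edge of $G'$ is an inclusion, $B=\sum_{x\in V(K)}B(\{x\})$.

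For the base case $n=1$, say $V(K)=\{v\}$, the poset of simplices of $G$ containing $v$ has least element $\{v\}$, so $B=B(\{v\})=\{v\}*(S_G(v))'$ is the cone over the Barycentric refinement of $S_G(v)$. As $G$ is a $d$-graph, $S_G(v)$ is a $(d-1)$-sphere, hence so is $(S_G(v))'$, and the cone over a $(d-1)$-sphere is a $d$-ball whose boundary is that sphere; this boundary is the full subcomplex on the simplices strictly containing $v$, i.e.\ precisely $\{f=c\}$. (The case $d=0$ is this instance with $S_G(v)$ empty.) For the inductive step, with $n\ge2$, I would use the recursive description of contractibility to pick $v\in V(K)$ with $K-v$ and $S_K(v)$ both contractible. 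By the inner hypothesis the ball $B_0=\sum_{x\in V(K-v)}B(\{x\})$ of $K-v$ is a $d$-ball whose boundary is the full subcomplex on simplices meeting $V(K-v)$ but not contained in it. Now $A:=B_0\cap B(\{v\})$ is the full subcomplex on simplices $\{v\}\cup\tau$ with $\emptyset\ne\tau\in S_G(v)$ and $\tau\cap V(K-v)\ne\emptyset$; via $\{v\}\cup\tau\mapsto\tau$ this is identified with the subcomplex of $(S_G(v))'$ on the simplices of $S_G(v)$ meeting $W:=S_G(v)\cap V(K-v)=S_K(v)$, that is, with the ``$B$'' built from the contractible subgraph $W=S_K(v)$ of the $(d-1)$-graph $S_G(v)$. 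Hence, by the outer hypothesis in dimension $d-1$, $A$ is a $(d-1)$-ball, and it is a full subcomplex both of $\delta B(\{v\})=(S_G(v))'$ and of $\delta B_0$ (each simplex $\{v\}\cup\tau$ meets $V(K-v)$ and, containing $v\notin V(K-v)$, is not contained in it).

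At this point $B=B_0\cup B(\{v\})$ is a union of two $d$-balls whose intersection $A$ is a $(d-1)$-ball that is a full subcomplex of the boundary of each, and I would conclude that $B$ is a $d$-ball. The way I would carry this out is to pass from $B_0$ to $B$ by a finite sequence of elementary moves, each adjoining to the current $d$-ball a single vertex whose unit sphere is a contractible full subcomplex of the current boundary (``thickening the boundary''): first the cone point $v$, whose unit sphere in $B_0\cup\{v\}$ is $A$, then the remaining vertices of $B(\{v\})$, namely the simplices $\{v\}\cup\tau$ with $\tau\cap W=\emptyset$, which correspond to the vertices of the $(d-1)$-sphere $(S_G(v))'$ lying outside $A$, adjoined in the order of a collapse of $(S_G(v))'$ onto $A$. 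That each such move preserves the property of being a $d$-ball is itself proved by induction on $d$ by inspecting the links of the vertices involved. Tracking the boundary through these moves then shows that $\delta B$ is the full subcomplex on the simplices meeting $V(K)$ but not contained in it, i.e.\ $\{f=c\}$, which is therefore a $(d-1)$-sphere bounding $B$.

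I expect the gluing step to be the main obstacle, and in particular carrying it out without circularly invoking the Jordan--Brower--Schoenflies theorem: what is needed there is a vertex-removal order collapsing the $(d-1)$-sphere $(S_G(v))'$ onto the full-subcomplex $(d-1)$-ball $A$ whose dual addition order on the cone $\{v\}*(S_G(v))'$ realizes each step as a boundary-thickening of the current $d$-ball over $B_0$, together with the (routine but careful) verifications that a boundary-thickening move preserves $d$-ballness and that the boundary obtained at the end is exactly $\{f=c\}$. Alternatively one could invoke, once and for all, a general lemma that the union of two $d$-balls meeting in a common $(d-1)$-ball which is a full subcomplex of each boundary is again a $d$-ball, and reduce the whole statement to it.
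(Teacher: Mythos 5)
Your proposal follows essentially the same route as the paper: induct on the number of vertices of $K$ in the order in which the contractible graph is built up, write $B=\sum_{x\in V(K)}B(\{x\})$ in the Barycentric refinement $G'$, and in the inductive step glue the unit ball $B(\{v\})$ of the last vertex onto the ball $B_0$ obtained from $K-v$. Where you genuinely differ is in the treatment of the intersection and of the gluing: the paper merely defines the intersection $B(x_n)\cap\bigcup_{k<n}B(x_k)$ and reduces the whole step to the unverified local claim that every point $y$ in the intersection of the two boundaries has a $(d-1)$-ball as its unit sphere, whereas you identify $B_0\cap B(\{v\})$ with the lemma's own construction applied to the contractible subgraph $S_K(v)$ of the $(d-1)$-sphere $S_G(v)$, and run an outer induction on the dimension $d$ to conclude that this intersection is a $(d-1)$-ball sitting as a full subcomplex in both boundaries. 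That identification is an improvement: it makes explicit use of the second half of the definition of contractibility (that $S_K(v)$ is contractible), which the paper's argument never invokes, and it turns the gluing into the clean statement that two $d$-balls meeting in a common $(d-1)$-ball contained in both boundaries unite to a $d$-ball. That last fact you only sketch, via boundary-thickening moves ordered by a collapse of $(S_G(v))'$ onto $A$; but the paper leaves exactly the same step at the level of a one-line link check, so your write-up is, if anything, the more complete of the two — the main remaining work is to verify that a single boundary-thickening move preserves ballness and that the collapse order onto $A$ exists, the latter again coming from contractibility of $S_K(v)$.
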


\begin{proof}
We use induction with respect to the number $n$ of vertices of $K$. If $K$ has one point,
then it is a unit ball $B(x)$ and the surface $\{ f = c \}$ is a sphere (it is not
graph theoretically isomorphic to $S(x)$, but one can define an equivalent 
discrete cell complex on it which renders it equivalent).\\
If $K=\{x_1, \dots, x_n\}$ is given in the order with which the
contractible set $K$ is built up, start with $B(x_n)$ and
define $U=B(x_n) \cap \bigcup_{k=1}^{n-1} B_k$.
By induction assumption $\bigcup_{k=1}^{n-1} B(x_k)$ is a ball and
$B(x_n)$ is a ball. 
In general, in the Barycentric refinement $G'$ of $G$, 
the union of a ball $U = \bigcup_{k=1}^{n-1} B(x_k)$, (where the $x_k$ are all 
zero dimensional in the original graph $G$) and a unit ball $B(x_n)$ of a 
boundary point $x_n$ of $U$ (which is also a zero dimensional simplex in $G$) 
is still a ball: in order to verify this, we only have to check that
for a point $y$ in the intersection of the boundaries of $U$
and $B(x_n)$, the unit sphere is a $(d-1)$ ball.
\end{proof}

{\bf Remark.} The statement ``if $U \subset G$ is a ball in a $d$-graph $G$
and $x$ is a boundary point of $U$, then $U \cup B(x)$ is a ball in $G$" is
wrong in general. In general, there are refinements needed which make sure that
the new ball $B(x)$ does not touch $U$ elsewhere. 

\begin{thm}[Foliation]
A $d$-graph $G$ is a $d$-sphere if and only if it admits a 
function $f$ such that for every $c \notin {\rm im}(f)$, 
the graph $\{ f = c \}$ is either empty or a $(d-1)$-sphere. 
\end{thm}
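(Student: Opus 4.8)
The plan is to prove both directions by reducing to the already-established Discrete Reeb theorem and to the preceding Lemma.

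For the forward direction, suppose $G$ is a $d$-sphere. By the Discrete Reeb theorem, $G$ admits a function $f$ with exactly two critical points, say a minimum $x_0$ and a maximum $x_m$. I claim this same $f$ has the level-surface property. First I would note that the construction in part a) of the Reeb proof actually builds $f$ by a collapsing sequence $G = G_0 \supset G_1 \supset \cdots$ where each $G_{k+1} = G_k - x_k$ is contractible; the value $c$ sits strictly between two consecutive vertex values $f(x_k) < c < f(x_{k+1})$, so $\{f = c\}$ is the interface between the sublevel set $\{f \le c\} = \{x_0, \dots, x_k\}$ (a contractible subgraph $K_c$ of $G$) and the superlevel set $\{f \ge c\}$ (which is $G$ minus that contractible set, hence also contractible, being built by the reverse collapsing sequence from the top). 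Applying the Lemma to $K_c$ gives that $\{f = c\}$ is a $(d-1)$-sphere bounding the ball $\{f \le c\}$ in $G'$. The extreme cases $c < \min f$ or $c > \max f$ give the empty graph. So every non-attained level is empty or a $(d-1)$-sphere.

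For the reverse direction, suppose $f$ is a function on a $d$-graph $G$ such that every non-attained level $\{f = c\}$ is empty or a $(d-1)$-sphere. I would show that $f$ has exactly two critical points, then invoke Discrete Reeb. The key local computation is at a vertex $x$: consider values $c^- = f(x) - \epsilon$ and $c^+ = f(x) + \epsilon$ for small $\epsilon$. As $c$ passes through $f(x)$, the level surface $\{f = c\}$ changes by a ``surgery'' localized inside the unit ball $B(x)$, and outside $B(x)$ the two surfaces $\{f = c^-\}$ and $\{f = c^+\}$ agree. The change inside $B(x)$ is precisely the replacement of $S^-_f(x)$-type caps by $S^+_f(x)$-type caps across the center manifold $B_f(x) = \{f = f(x)\}$ in $S(x)'$. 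Since both $\{f = c^\pm\}$ are $(d-1)$-spheres (or empty) by hypothesis, and the local surgery piece $B_f(x)$ sits inside $S(x)'$ which is a $(d-1)$-sphere, I would argue that $B_f(x)$ must itself be a $(d-2)$-sphere: it is the boundary inside $S(x)$ separating $S^-_f(x)$ from $S^+_f(x)$, and if it were anything other than a sphere the level surface could not stay a sphere on both sides. By the reformulation stated in the excerpt ($x$ is a critical point iff $B_f(x)$ is not a sphere), this means $x$ is regular. Hence the only critical points are where $B_f(x)$ degenerates, i.e.\ where $S^-_f(x)$ or $S^+_f(x)$ is empty — forcing $x$ to be a local min or local max; and a level-crossing count (each connected sphere-component of $\{f = c\}$ must be born at a min and die at a max) shows there is exactly one of each.

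The main obstacle will be making the ``local surgery'' argument in the reverse direction rigorous: precisely identifying how $\{f = c\}$ transforms as $c$ crosses $f(x)$, controlling that the change is confined to $B(x)$ in the Barycentric refinement, and concluding that a non-spherical $B_f(x)$ genuinely obstructs sphericity of the nearby level sets (this is where, as the excerpt hints, one may need Jordan--Brower--Schoenflies to know that a $(d-1)$-sphere sitting in $\{f = c^-\}$, which is a $(d-1)$-sphere, actually bounds and that the modification cannot repair a topological defect). I would handle the min/max counting by an Euler-characteristic or connectedness sweep in $c$: $\{f = c\}$ is empty for $c$ below $\min f$ and above $\max f$, a single $(d-1)$-sphere component must appear at exactly one vertex and vanish at exactly one vertex, and any intermediate critical point would create an extra component or change topology, contradicting the hypothesis. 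Once only two critical points remain, the Discrete Reeb theorem closes the argument.
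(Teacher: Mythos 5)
Your overall architecture is the paper's: both directions are reduced to the Discrete Reeb theorem together with the Lemma on contractible subgraphs, and the reverse direction is organized around showing that the given $f$ has exactly two critical points. In the forward direction one small inversion is worth noting: the Reeb construction removes vertices in increasing order of $f$, so what it gives you directly is contractibility of the superlevel sets $\{f>c\}=G_{k+1}$; contractibility of the sublevel sets $\{x_0,\dots,x_k\}$ (which is what you feed into the Lemma and attribute to the construction, and which the paper also simply asserts) is not automatic from the collapsing order, since $x_{k+1}$ need not be attached to the previously removed vertices. Since the Lemma can be applied to whichever side is known to be contractible (using $f$ or $-f$), this is bookkeeping rather than a different route, but your justification for which side is contractible is currently backwards.

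The reverse direction is where your proposal stops short of a proof. Your sweep argument that there is exactly one local minimum and one local maximum is fine (and more explicit than the paper), but the heart of the matter --- that at every non-extremal vertex the center manifold $B_f(x)$ is a $(d-2)$-sphere, so that the vertex is regular --- is precisely the step you defer as ``the main obstacle'', and the surgery-by-contradiction argument (``a non-spherical $B_f(x)$ could not be repaired on both sides'') is never supplied. The paper treats this step directly rather than by contraposition: pick $c\notin {\rm im}(f)$ in the value gap adjacent to $f(x)$; by local injectivity the sign pattern of $f-c$ on simplices of $S(x)$ agrees with that of $f-f(x)$, so the trace of the level surface $\{f=c\}$ on $S(x)'$ is exactly $B_f(x)$; since $\{f=c\}$ is a $(d-1)$-sphere, this trace is taken to be a $(d-2)$-sphere, and regularity of $x$ then follows from the ``Regular points have spheres as center manifolds'' theorem, whose relevant direction is the Jordan--Brouwer--Schoenfliess one you anticipated. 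So you identified all the right ingredients (the identification $\{f=c\}\cap S(x)=B_f(x)$, the center-manifold criterion, JBS), but where the paper uses the identification affirmatively in one line, you replace it with an obstruction argument that you acknowledge you cannot yet make rigorous; as written, that leaves the decisive implication of the reverse direction unproved.
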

\begin{proof}
Both directions follow from the just proven theorem but we need the above lemma. 
a) Assume $G$ is a $d$-sphere. By the just proven Reeb theorem,
there is a function $f$ with exactly $2$ critical points. 
For any $c$ in the complement of the range of $f$, 
the graph $S=\{ f = c \}$ is a $(d-1)$-sphere. The reason is that
$f<c$ and $f>c$ are both contractible and so balls
with the boundary $S$ as a $(d-1)$-sphere.  \\
b) Let now $G$ be a $d$-graph for which a function $f$ exists with the assigned
properties. This function has exactly two critical points. The reason is that 
$ \{ f = c \} \cap S(x)$ is $B_f(x)$ which is a sphere, implying that the point is
a regular point. 
\end{proof} 

\paragraph{}
If $x$ is a regular point of $f$, then both $S^+_f(x)$ and $S^-_f(x)$ are 
$(d-1)$-balls and $B_f(x)$ is a $(d-2)$-sphere:

\begin{coro}[Regular center manifold]
For any $d$-graph and a regular point $x$ for $f$, the center manifold 
$B_f(x)=\{ f = f(x) \}$ inside the $(d-1)$-sphere $S(x)$ is always a $(d-2)$-sphere. 
\end{coro}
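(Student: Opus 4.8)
The plan is to deduce the Corollary directly from the Lemma, applied not to $G$ itself but to the $(d-1)$-sphere $S(x)$. First I would unwind the definition of a regular point: since $x$ is not critical, \emph{both} $S_f^-(x)$ and $S_f^+(x)$ are contractible, and in particular both are nonempty (the empty graph, being the $(-1)$-sphere, is not contractible, so a minimum or maximum of $f$ is automatically critical and is not at issue here). Because $G$ is a $d$-graph, $S(x)$ is a $(d-1)$-sphere, hence a $(d-1)$-graph, and the restriction of the coloring $f$ to the vertex set of $S(x)$ is again locally injective with $f(x) \notin f(S(x))$; thus $S(x) = S_f^-(x) \sqcup S_f^+(x)$ as a vertex set and $g := f - f(x)$ is negative exactly on $S_f^-(x)$ and positive exactly on $S_f^+(x)$.

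Next I would invoke the Lemma with the $(d-1)$-graph $S(x)$ in the role of $G$ and the contractible subgraph $K := S_f^-(x)$, using the function $g$, which is $<0$ on $K$ and $>0$ everywhere else on $S(x)$. The Lemma then asserts that the level surface $\{g = 0\}$ is a sphere bounding the ball $\{g \le 0\}$ in the Barycentric refinement $S(x)'$. By discrete Sard this level surface is a graph one dimension below $S(x)$, so it is a $(d-2)$-sphere; and $\{g = 0\}$ is by construction exactly the center manifold $B_f(x) = \{f = f(x)\}$ generated by the simplices of $S(x)'$ on which $f - f(x)$ changes sign. This gives the claim. As a byproduct, the ball $\{g \le 0\}$ and its complementary ball $\{g \ge 0\}$ in $S(x)'$ record that $S_f^-(x)$ and $S_f^+(x)$ are $(d-1)$-balls, the companion statement quoted just before the Corollary; one could equally run the argument with $K = S_f^+(x)$.

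The work here is essentially bookkeeping: all the geometric content — that the union of the unit balls of a contractible family of vertices is a ball in the refinement, hence that its boundary is a sphere — is already packaged in the Lemma, and the Remark following the Lemma warns precisely that one cannot shortcut this by naively gluing unit balls without the built-in refinements. The only points that require a moment's care are (i) checking that $x$ being regular really forces both $S_f^{\pm}(x)$ to be \emph{nonempty} contractible graphs, so the Lemma's hypothesis is met, and (ii) keeping the dimension bookkeeping consistent in low dimensions, e.g.\ for $d=1$ where ``$(d-2)$-sphere'' means the empty graph and indeed $B_f(x)$ is empty at a regular point of a cycle. I do not expect a genuine obstacle beyond these.
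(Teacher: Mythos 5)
Your argument is essentially the paper's own proof: apply the Lemma inside the $(d-1)$-sphere $S(x)$ with the contractible subgraph $K=S_f^-(x)$ (both sides being contractible at a regular point), so that $\{f\le f(x)\}$ is a ball in $S(x)'$ whose boundary sphere is exactly $B_f(x)$. Your extra bookkeeping about nonemptiness and the low-dimensional cases is consistent with the paper's conventions and does not change the argument.
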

\begin{proof}
As one side is contractible, the lemma above shows that it is a ball. Its
boundary is then a sphere. It is $B_f(x)$ in the Barycentric refinement. 
\end{proof}

\paragraph{}
The reverse is a bit more difficult. But here is the ball version:

\begin{thm}[Balls]
Any $d$-ball admits a function $f$ with exactly two critical points. 
For such a function $f$, every level surface $\{f = c\}$ is either empty or a 
$(d-1)$-ball. 
\end{thm}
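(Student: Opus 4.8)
The plan is to reduce this statement to the two Reeb-type results already established for $d$-spheres. A $d$-ball $B$ is by definition a punctured $d$-sphere, so there is a $d$-sphere $G$ and a vertex $p \in G$ with $B = G - p$. First I would apply the Discrete Reeb theorem to $G$: it yields a function $g$ on $G$ with exactly two critical points. The proof of part (a) of that theorem actually constructs $g$ by the erasing sequence $G = G_0, G_1 = G - x_0, \dots$; I would choose the \emph{first} erased vertex to be $p$ itself, so that $g$ attains its global minimum at $p$ and $g$ restricted to $B = G - p$ is a function whose remaining special point is the global maximum $x_{n-1}$. The claim is then that $g|_B$ has exactly two critical points in $B$ (in the symmetric sense of this paper): the interior maximum $x_{n-1}$, and a \emph{boundary} critical point which appears because removing $p$ converts the old regular neighbor structure into a boundary. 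Concretely, for a vertex $x$ adjacent to $p$ in $G$, the sets $S^\pm_{g|_B}(x)$ are obtained from the old $S^\pm_g(x)$ by deleting $p$ from whichever side contained it; the side losing a vertex may cease to be contractible, producing exactly one boundary critical point. One must verify that for $d$-balls this count is exactly two — here I would lean on the characterization, promised in the introduction, that for a $d$-ball the symmetric critical points are precisely the maximum and minimum, the minimum now sitting on the boundary.

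For the second assertion — that every level surface $\{g|_B = c\}$ with $c \notin \mathrm{im}(g)$ is either empty or a $(d-1)$-ball — I would transport the Foliation theorem. In the ambient $d$-sphere $G$, the slice $\{g = c\}$ is either empty or a $(d-1)$-sphere $S_c$ bounding the two balls $\{g < c\}$ and $\{g > c\}$. Since $g(p)$ is the global minimum, the vertex $p$ lies in $\{g < c\}$ for every $c$ in the valid range; hence $\{g|_B = c\} = \{g = c\} \setminus (\text{star of } p) = S_c$ itself whenever $S_c$ does not meet the unit ball $B(p)$, and equals a punctured copy $S_c - (\text{a simplex})$ — which is again a $(d-1)$-ball — when it does. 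The Lemma (contractible subgraph gives a ball) is the tool that handles the boundary bookkeeping: the part of $S_c$ near $p$ that survives in $B'$ is exactly the portion lying over the contractible set $B - \mathrm{star}(p)$, so it is a $(d-1)$-ball.

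The main obstacle I anticipate is not the sphere-level input but the \emph{boundary bookkeeping} in the Barycentric refinement, exactly the subtlety flagged in the Remark after the Lemma: "$U \cup B(x)$ is a ball" fails without refinement. When we delete $p$ from $G$, the vertices of $G$ adjacent to $p$ acquire a boundary, and I must check that the union of their unit balls in $B'$ is genuinely a $(d-1)$-ball and not a self-touching object. The careful step is therefore to run the induction of the Lemma along the build-up order of the contractible graph $B - \mathrm{star}(p)$ (which is contractible because $B = G - p$ is contractible and removing the open star of $p$ from a ball leaves a contractible set), checking at each stage that the new unit ball is glued to the previous union along a single $(d-1)$-ball in the link of the gluing vertex. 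Once that local $(d-1)$-ball condition is verified, both the critical-point count and the level-surface statement follow by applying the already-proven Foliation theorem and its Corollary verbatim inside $B'$; I would phrase the write-up so that all the real work is isolated in this one refinement lemma.
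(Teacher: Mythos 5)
There is a genuine gap in the first half of your plan. Restricting the Reeb function $g$ of the ambient sphere $G$ to $B=G-p$ does not in general leave exactly two critical points: \emph{any} neighbor of $p$, not just the second vertex of the erasing sequence, can become critical, because for such a vertex $x$ the set $S^-_{g}(x)\setminus\{p\}$ may cease to be contractible (typically by becoming disconnected). Concretely, let $G$ be the octahedron with antipodal pair $p,q$ and rim $4$-cycle $a,b,c,d$, and take the admissible erasing order $p,a,b,c,d,q$ with values $0,1,\dots,5$. On $G$ this function has exactly the two critical points $p$ (minimum) and $q$ (maximum). On the wheel $B=G-p$ the restriction has \emph{three} critical points: $a$ (its lower set is now empty), $q$ (maximum), and also $d$, since $S^-(d)=\{a,c\}$ consists of two non-adjacent vertices of the path $a\,$--$\,q\,$--$\,c$ and is therefore not contractible. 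So the count ``exactly two'' fails for your construction, and the step you propose to close it with --- leaning on the characterization ``promised in the introduction'' that the critical points of a $d$-ball are precisely the maximum and minimum --- is circular: that promise is exactly the existence statement of the Balls theorem you are proving (and, read as a claim about arbitrary functions on a ball, it is false). What is missing is control of the trace of the function on the link $S(p)$, i.e. on the boundary sphere of $B$: one must arrange that this restriction itself has only two critical points, and nothing in your choice of erasing order guarantees that.

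This is precisely where the paper's route differs from yours. The paper first applies the Reeb theorem to the boundary sphere $S=\delta B$ to fix two critical points $a,b$ \emph{on the boundary}, then cones off the boundary to get a sphere $G+x_0$, and uses the freedom in the Reeb construction to produce a function on $G+x_0$ whose only critical points are the prescribed $a,b$; the level sets of this function, cut along $B(x_0)$, give the $(d-1)$-ball leaves. Your second paragraph (transporting the Foliation theorem and cutting each level sphere $S_c$ along the star of $p$, with the Lemma handling the refinement bookkeeping) is essentially the paper's slicing step and is fine in spirit, but it only becomes usable once the critical-point count is repaired by controlling the function on $S(p)$ first rather than inheriting it blindly from the ambient Reeb function.
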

\begin{proof}
The proof of the Reeb theorem shows that any pair 
of two points can be chosen to be critical points (maxima or minima). 
Start by applying the Reeb theorem to the boundary sphere $S$ of $B$. 
Let $\{a,b\}$ be the critical points. 
Complete $G$ by making a cone extension over the boundary. With the added point, it
becomes a sphere $G+x_0$. Again by Reeb, there is a function on the completed sphere which has
the two critical points $a,b$. Every 
intersection of $f=c$ with $B(x_0)$ is now either empty or 
a $(d-1)$-ball. This is a ball foliation we were looking for. 
\end{proof} 

\paragraph{}
In general, at critical points, the center manifold $B_f(x) = \{ f = f(x)\}$ 
can be rather arbitrary. Any manifold which can occur as a hypersurface of a 
$(d-1)$-dimensional Euclidean sphere can also occur as a surface $B_f(x)$ and
the same is true in the discrete. To realize it, we would just have to make sufficient
many Barycentric refinements first. 

\paragraph{}
Here is the equivalence of the {\bf symmetric notion of contractibility} and
{\bf one sided notion of contractibility} in the class of $d$-graphs.

\begin{thm}[Critical points]
Given a function $f$ on a $d$-graph. Then for every unit sphere $S(x)$, the 
subgraph $S^-_f(x)$ is contractible if and only if $S^+_f(x)$ is contractible.
\end{thm}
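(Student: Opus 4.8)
The plan is to exploit the already established machinery: by the \textbf{Foliation} theorem and the \textbf{Regular center manifold} corollary, a point $x$ is a regular point precisely when $B_f(x)$ is a $(d-2)$-sphere, and this is detected locally inside the $(d-1)$-sphere $S(x)$. So the real content to prove is: inside a $(d-1)$-sphere $S$, if the level set $\{f = c\}$ is a sphere (equivalently, if one of $S^-_f, S^+_f$ is contractible) then the other side is contractible too. First I would reduce the statement to a claim about spheres with a codimension-one ``equator'': given a $(d-1)$-sphere $S$ and a $(d-2)$-sphere $H$ sitting inside $S'$ that separates $S$ into $S^-$ and $S^+$, show that $S^-$ contractible implies $S^+$ contractible.

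The key step is to invoke the discrete Jordan--Brower--Schoenfliess theorem cited in the excerpt (\cite{KnillJordan}): a $(d-2)$-sphere $H$ embedded in a $(d-1)$-sphere $S$ divides $S$ into two \emph{balls}. Applied to $H = B_f(x)$ inside $S = S(x)$, this gives immediately that \emph{both} $S^-_f(x)$ and $S^+_f(x)$ are $(d-1)$-balls, hence both contractible. Conversely, if $S^-_f(x)$ is contractible, then by the Lemma it is a ball whose boundary is $B_f(x)$, a $(d-2)$-sphere; then Jordan--Brower--Schoenfliess gives that the complementary side $S^+_f(x)$ is also a ball and therefore contractible. So the argument is essentially symmetric once one knows that contractibility of either side forces $B_f(x)$ to be a genuine $(d-2)$-sphere, which is exactly the Lemma plus the Corollary already in hand. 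Thus the proof has the shape: (i) assume $S^-_f(x)$ contractible; (ii) by the Lemma, $B_f(x)$ is a $(d-2)$-sphere bounding the ball $S^-_f(x)$ in $S(x)'$; (iii) by discrete Jordan--Brower--Schoenfliess, $B_f(x)$ also bounds the other component $S^+_f(x)$ as a ball, hence $S^+_f(x)$ is contractible; (iv) swap the roles of $+$ and $-$ for the converse.

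The main obstacle I anticipate is the bookkeeping between a graph and its Barycentric refinement. The sets $S^-_f(x)$ and $S^+_f(x)$ are defined as subgraphs of $S(x)$ generated by vertices, whereas $B_f(x)$ lives in $S(x)'$; so ``$S^-_f(x)$ is a ball'' and ``$S^-_f(x)$ is contractible'' need to be matched carefully across this refinement, and the decomposition $S(x)' = (S^-_f(x))' \cup B_f(x)\text{-collar} \cup (S^+_f(x))'$ must be set up so that Jordan--Brower--Schoenfliess applies literally. I would handle this exactly as the Lemma's proof does --- work in $S(x)'$, note that $\{f < c\}$ and $\{f > c\}$ together with the slab $\{f = c\}$ tile $S(x)'$, and use that contractibility is a Barycentric-invariant notion --- so this is a technical point rather than a genuine difficulty. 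A secondary subtlety is the low-dimensional base cases ($d = 0, 1$): for $d = 1$ a unit sphere is $S^0$ (two points), $B_f(x)$ is the empty graph $= S^{-2}$, and one of $S^{\pm}_f(x)$ is a point while the other is empty, so ``contractible'' must be read as ``a single point'' and the equivalence is checked directly; these anchor the induction implicit in citing the earlier results.
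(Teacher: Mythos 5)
Your proposal is correct in substance, but it takes a genuinely different route from the paper. The paper's own proof of this theorem deliberately avoids the Jordan--Brouwer--Schoenfliess theorem: it applies the Lemma to the $(d-1)$-sphere $S(x)$ with $K=S^-_f(x)$ to get the ball $\{f\le c\}$ with sphere boundary, and then uses the contractibility of $K$ to shrink it vertex by vertex until it is a single vertex, at which stage the complementary side is a punctured $(d-1)$-sphere, hence a $(d-1)$-ball and contractible; the homotopy type of the complement is tracked through this reduction. You instead compose the Lemma (one side contractible forces $B_f(x)$ to be a $(d-2)$-sphere bounding a ball in $S(x)'$) with the discrete Jordan--Brouwer--Schoenfliess theorem of \cite{KnillJordan} to conclude that the other side is also a ball, hence contractible. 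This is legitimate and arguably cleaner --- it is essentially direction (i) of the paper's later theorem on regular points having spheres as center manifolds, applied one theorem early --- but it imports the deeper Jordan--Brouwer--Schoenfliess machinery, which the paper explicitly keeps out of the Reeb circle of statements (it remarks that this tool is only needed when characterizing center manifolds of regular points). There is no circularity, since the Lemma, the Corollary and the cited Jordan--Brouwer--Schoenfliess theorem do not depend on the present statement, and the refinement bookkeeping you flag (passing from the closed region $\{f\ge c\}\subset S(x)'$ produced by Jordan--Brouwer--Schoenfliess to the vertex-generated subgraph $S^+_f(x)$ of $S(x)$) is a genuine but routine step, at the same level of rigor as the paper's own arguments.

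One small slip in your base-case discussion: for $d=1$ the unit sphere $S(x)$ has exactly two vertices, so the configuration ``one of $S^{\pm}_f(x)$ is a point and the other empty'' cannot occur; the possibilities are one point on each side (regular point) or two points on one side and the empty graph on the other (local extremum), and in the latter case both sides are non-contractible, so the equivalence still holds. Also, the empty center manifold is the $(-1)$-sphere, not $S^{-2}$. Neither remark affects your main argument.
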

\begin{proof}
We know from the lemma that $\{ f \leq c\}$ is a ball and $S$ is a $(d-1)$-sphere.
Because $K$ is contractible, we can reduce $K$ to less and less
vertices and still keep $G$ a sphere. Once $K$ is a 1-point graph $x_1$, 
then $S$ is the unit sphere $S(x_0)$ and the punctured sphere 
$S - x_0 = S^+_f(x)$ is a $d-1$ ball and so contractible. 
\end{proof}

\paragraph{}
It follows that if $S=\{ f=c \} \subset G'$ in a $d$-sphere in a graph $G$ for which 
$K=\{ y \in V(G) | f(y) < c \}$ is contractible, then $S$ is a $(d-1)$-sphere and 
both $\{ f \leq c \} \subset G'$ and $\{ f \geq c \} \subset G'$ are $(d-1)$-balls. 

\begin{figure}[!htpb]
\scalebox{0.95}{\includegraphics{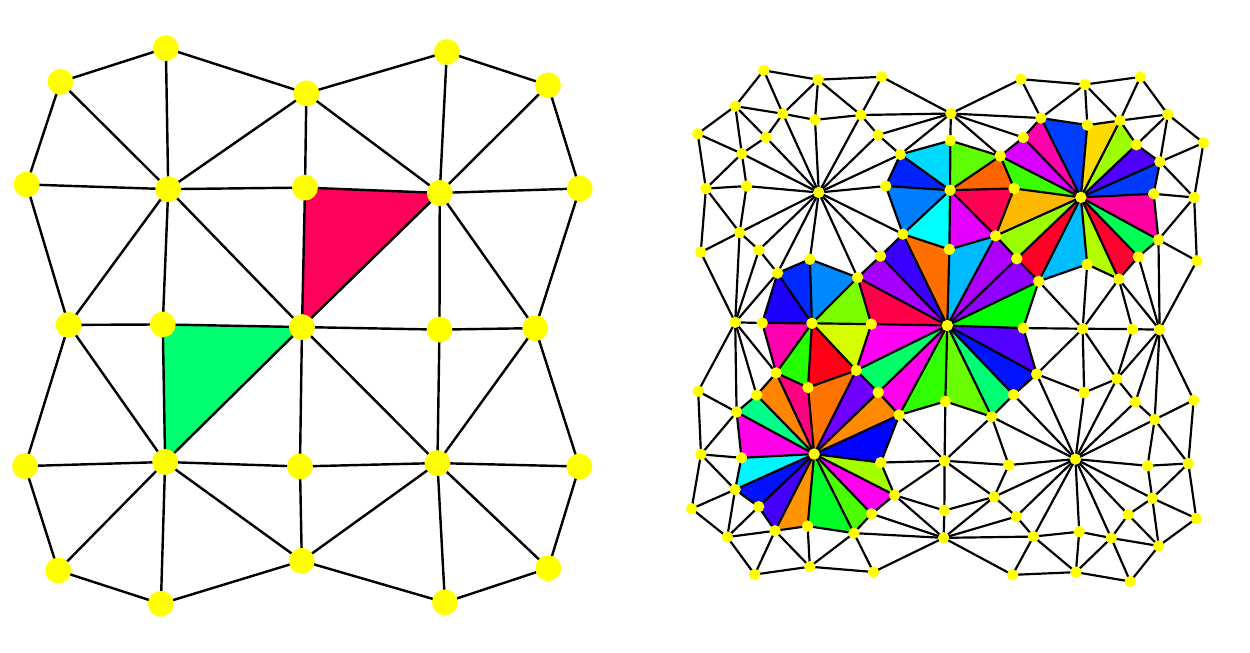}}
\label{contractibletoball}
\caption{
The picture shows a contractible graph $K$ inside a $2$-ball $G$. It has 5 vertices and is not a
ball. It produces a $2$-ball $\{ f \leq c \}$ in the Barycentric refinement $G'$ which has 
a $1$-sphere (a circular graph) as a boundary.
}
\end{figure}

\section{Morse functions} 

\paragraph{}
We call a locally injective function $f$ a {\bf Morse function}, if at every critical 
point $x$ of $f$, the center manifold $B_f(x)$ is either empty or a {\bf Cartesian product} of two spheres. 
At a regular point we by definition have that $B_f(x)$ is a sphere. What is different in 
the Morse case that the structure of the center manifold is assumed to be special. In general, 
the center manifold $B_f(x)$ can be a quite arbitrary hypersurface in $S(x)$. 

\paragraph{}
For a Morse function $f$ and a regular point $x$ of $f$,
the unit sphere $S(x)$ is homeomorphic to the suspension of the center manifold 
$B_f(x)$, where $B_f(x)$ is a $(d-2)$-sphere. At a 
critical point $x$, the unit sphere $S(x)$ is homeomorphic to the join 
$S^k + S^l$, where $B_f=S^k \times S^l$ with $k+l=d-2$. This makes sense because
the join has one-dimension more than the Cartesian product and additionally is 
a sphere, while the Cartesian product is never a sphere. 

\paragraph{}
The situation mirrors the continuum classical differential topology case, where at a Morse critical point,
a small sphere $S(x)$ is the join of the {\bf stable unit sphere manifold} $S(x) \cap W^-(x)$ and 
{\bf unstable unit sphere manifold} $S(x) \cap W^+(x)$, where $W^{\pm}(x)$ are the 
standard {\bf stable and unstable manifolds} of the gradient field $\nabla f$ at $x$. The center
manifold in the continuum is the Cartesian product of $S(x) \cap W^-(x)$ and $S(x) \cap W^+(x)$. 
See Figure~(\ref{hyperbolic}). 

\paragraph{}
The classical case follows from the Morse lemma. In a coordinate
system near a critical point, in which $f=x_1^2+ \cdots + x_k^2 - x_{k+1}^2 - \cdots - x_d^2$ 
then $f=0$ intersected with a small sphere
$S_r(0) = \sum_j x_j^2 =r^2$ gives either $B_f(0)=\emptyset$ (at maxima $k=d$ or minima $k=0$) 
or then the product 
$$  \{ x_1^2+ \cdots + x_k^2 = r^2/2 \} \times \{ x_{k+1}^2+ \cdots + x_d^2 = r^2/2 \}  \; . $$
Of course, at regular points of a smooth function, 
$B_f(x) = S_r(x) \cap \{ f = f(x) \}$ is a $(d-1)$-sphere for small enough $r$. 

\paragraph{}
For a critical point with
Morse index $1$ or $2$ in a $3$-manifold for example, the center manifold 
$B_f(x) =\{ y \in S(x) \; | \; f(y) = f(x) \; \}$ is the intersection of a cone with a sphere which consists 
of two circles, which is $S^1 \times S^0$. At a maximum or minimum (where the Morse index is $0$ or $3$), the
center manifold is empty, which is the Cartesian product of $S^2 \times S^{-1}$. At a regular point,
the center manifold $B_f(x)$ is always a $1$-sphere, the intersection of the level surface 
$f=f(x)$ with $S(x)$.  \\

The formula \cite{indexformula} 
$$  j_f(x)=1-\chi(S(x))/2-\chi(B_f(x))/2 = 1-(\chi(S^+(x) +\chi(S^-(x)))/2 $$
holds at every vertex $x$ of an arbitrary finite simple graph. In the case of $d$-graphs with 
even $d$, it simplifies to $1-\chi(B_f(x))/2$ which leads to a poetic interpretation
of curvature at $x$ as an expectation of Euler characteristics of random $(d-2)$-graphs $B_f(x)$ 
in $S(x)$ \cite{indexexpectation,indexformula,eveneuler}. This bootstraps with Gauss-Bonnet to the fact 
that curvature in even dimension always an expectation of Euler characteristic of a well defined probability space
of random 2-graphs. If $d$ is odd, then it shows that curvature
is an expectation of Euler characteristic of $(d-2)$-graphs which bootstraps to the statement
that odd-dimensional manifolds have zero curvature everywhere and so by Gauss-Bonnet 
zero Euler characteristic. 

\paragraph{}
The following result uses in one direction the deeper Jordan-Brouwer-Schoenfliess theorem:

\begin{thm}[Regular points have spheres as center manifolds]
For a $d$-graph $G$, the center manifold $B_f(x)$ is a $(d-2)$-sphere if and only 
if $S_f^-(x)$ and $S_f^+(x)$ are both contractible. 
\end{thm}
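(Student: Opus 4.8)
The plan is to prove the two directions separately, with the easy direction first.

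\textbf{The easy direction.} Suppose $B_f(x)$ is a $(d-2)$-sphere. Then inside the $(d-1)$-sphere $S(x)$ we have an embedded $(d-2)$-sphere. This is exactly the setup for the discrete Jordan-Brouwer-Schoenfliess theorem \cite{KnillJordan}: an embedded $(d-1)$-sphere in a $d$-sphere — here with $(d-1)$ replaced by $d-2$ and $d$ by $d-1$ — divides the ambient sphere into two pieces, each of which is a $(d-1)$-ball, hence contractible. The two complementary regions of $B_f(x)$ in $S(x)$ are precisely $S_f^-(x)$ and $S_f^+(x)$ (after passing to the Barycentric refinement where $B_f(x)$ lives), so both are balls and in particular contractible. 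This is where the ``deeper'' Jordan-Brouwer-Schoenfliess input is used, as flagged in the remark preceding the theorem.

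\textbf{The converse direction.} Suppose $S_f^-(x)$ and $S_f^+(x)$ are both contractible. I want to conclude $B_f(x)$ is a $(d-2)$-sphere. Here I would invoke the already-proven Corollary (Regular center manifold): that corollary states exactly that at a regular point the center manifold is a $(d-2)$-sphere, and ``regular'' in this paper means neither $S_f^-(x)$ nor $S_f^+(x)$ is non-contractible — i.e.\ both are contractible. So this direction is essentially immediate from the corollary, whose proof in turn only used the Lemma (a contractible subgraph of a $d$-graph generates a ball in the Barycentric refinement) applied with $K = S_f^-(x)$ inside the $(d-1)$-graph $S(x)$: one side being a ball forces its boundary $B_f(x)$ to be a sphere.

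\textbf{Assembling the statement.} Combining the two directions: $B_f(x)$ is a $(d-2)$-sphere $\iff$ $x$ is a regular point $\iff$ both $S_f^\pm(x)$ are contractible. I should be slightly careful about one subtlety: the Lemma produces a ball in $G'$, and the identification of that ball's boundary with $B_f(x)$ (as a discrete cell complex, not necessarily as a graph-isomorphic copy) was already handled in the Lemma's proof and the Corollary, so I can cite it rather than redo it. The main obstacle, and the only place genuine topology enters, is the first direction: making precise that $B_f(x) \subset S(x)'$ really does separate $S(x)$ into the two sides $S_f^{\pm}(x)$ and that Jordan-Brouwer-Schoenfliess applies verbatim to this codimension-one embedding inside a $(d-1)$-sphere. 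The converse is a free consequence of results already in hand.
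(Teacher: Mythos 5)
Your proof follows essentially the same route as the paper: the direction assuming $B_f(x)$ is a $(d-2)$-sphere is handled by the discrete Jordan-Brouwer-Schoenfliess theorem splitting $S(x)$ into two balls, and the converse is exactly the paper's argument via the Lemma (equivalently the Regular center manifold Corollary), where a contractible side yields a ball in the Barycentric refinement whose boundary sphere is $B_f(x)$. Both directions and the supporting citations match the paper's proof, so the proposal is correct as written.
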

\begin{proof}
(i) If $B_f(x)$ is a $d-2$ sphere, then by the Jordan-Brouwer-Schoenfliess theorem, it
divides the $(d-1)$-sphere $S(x)$ into two parts which are both balls. 
Consequently they are contractible. \\
(ii) If one of the $S_f^-(x)$ and $S_f^+(x)$ is contractible, then their Barycentric
refinements are balls complementing a $(d-2)$-graph. Thus $B_f(x)$ must be a sphere.
\end{proof}

\section{Remarks}

\paragraph{}
Weyl in \cite{Weyl1925} (page 10) writes: 
{\it "Es ist zu fordern, dass diejenigen Elemente niederer Stufe, welche ein Element n'ter
Stufe begrenzen, ein m\"ogliches Teilungsschema nicht einer beliebigen (n-1)-dimensional Manifaltigkeit
sondern insbesondere einer (n-1) dimensional Kughel im n-dimensional Euklidischen Raum bilden. Und es ist
bisher nicht gelungen fuer $n \geq 4$ die kombinatorischen Bedingungen daf\"ur zu ermitteln."}
Weyl essentially describes simplicial complexes and in the following then describes how to get to the
continuum using Barycentric refinement. In the above sentence, he essentially states that a discrete
manifold should have a neighborhoods which are (n-1)-dimensional spheres which leads to the problem to 
describe spheres combinatorically. 

\paragraph{}
Weyl mentions taking an ``arbitrary (n-1)-dimensional manifold" as a unit 
sphere, one can look for example at the join $G=\mathbb{T}^2 \oplus K_1$ which is a contractible 3-manifold
in which the unit sphere of one point is the 2-torus. Since the space is three-dimensional and contractible
all topological notions (like homotopy or cohomology groups) are the same than for a 3-ball. The Euler
characteristic of course is $1$, the Betti numbers are $(1,0,0,0)$ like for the 3-ball. However,
the Wu characteristic of $\mathbb{T}^2 \oplus K_1$ is $1$, while the Wu characteristic of a 3-ball is $-1$. 
Also, any function on $G$ has more than 2 critical points. For example, if the maximum is at the top point $p$ 
of the pyramid, then there is a minimum on the base (the torus $\mathbb{T}^2$). But since 
$i_f(p)=1-\chi(S_f(p))=1-\chi(\mathbb{T}^2)=1$ and at the minimum $i_f(p)=1-0=1$ and the Euler characteristic is
$1$, there has to be an other point. 


\begin{figure}[!htpb]
\scalebox{0.27}{\includegraphics{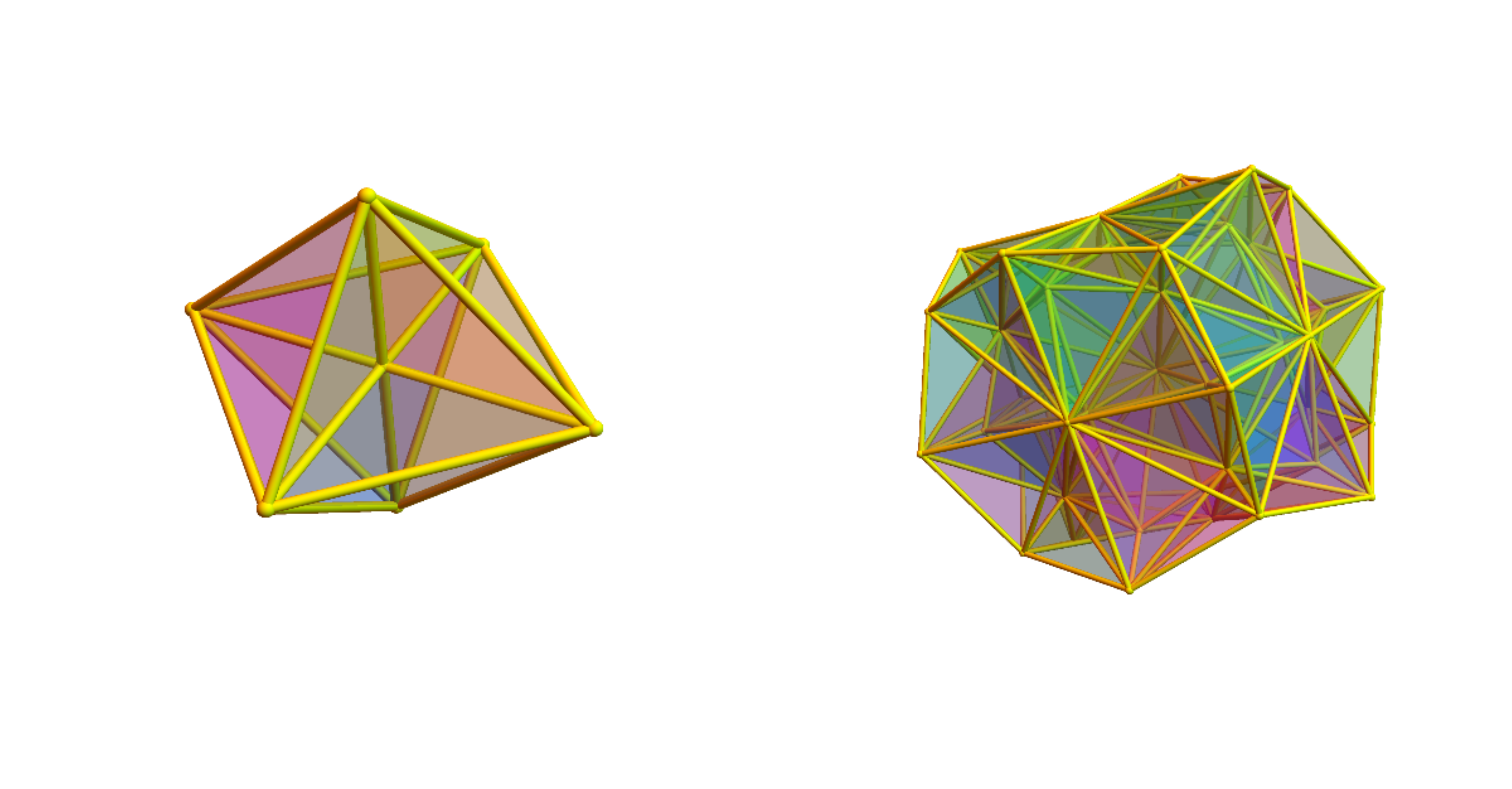}}
\scalebox{0.14}{\includegraphics{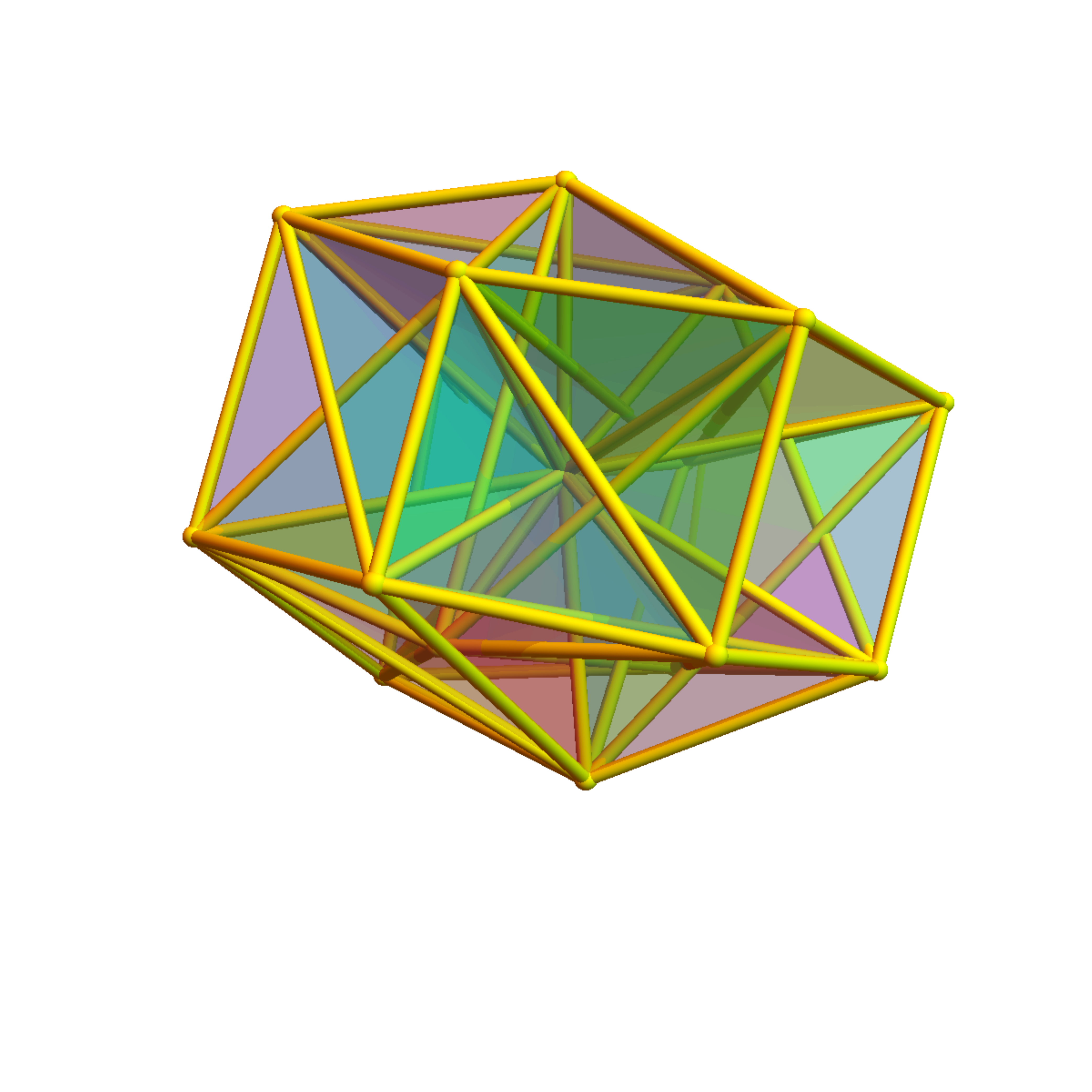}}
\label{reeb}
\caption{
A three ball $S^2 \oplus K_1$, where $S^2$ is the octahedron graph and its
Barycentric refinement. Below, we see $T^2 \oplus K_1$, where $T^2$ is a
discrete 2-torus. It is not a 3-ball as one point has a torus as a unit sphere. 
But it is contractible. Only if we look at the topology of the boundary, we can see
it. The Wu characteristic is $\omega(G)=\chi(G)-\chi(\delta G)=1-0=0$. For the 3-ball
$\omega(G)=\chi(G)-\chi(\delta G)=1-2=-1$. 
}
\end{figure}

\paragraph{}
The classical Reeb sphere theorem assures that if a differentiable $d$-manifold admits a differentiable
function with exactly two critical points, then it is homeomorphic to the standard Euclidean
$d$-sphere (not necessarily diffeomorphic although). (For a proof see \cite{Mil63} Theorem 4.1) 
The combinatorial analog of Reeb's theorem is an if and only if statement. 
This is absent in the continuum.  The statement 
``M is a smooth manifold homeomorphic to a sphere, then $M$ admits a function with exactly 
two critical points" is only true $d \leq 6$ (see \cite{Matsumoto} Theorem 3.6). 
This can not be improved because the exotic $7$-spheres of Milnor 
admit a function with exactly two critical points. The Reeb theorem was used by Milnor to establish that 
they are homeomorphic to standard spheres. 

\paragraph{}
We have defined $f$ to be Morse, if every $B_f(x)$ is either a sphere or the product of 
two spheres $S_k \times S_l$ with $k+l=d-2$ or then the empty graph. 
In the case $d=2$, we either have $S^0 \times S^0$ (hyperbolic points) 
with $j(x)=-1$ or the empty graph $0$ (maxima or minima) with $j(x)=1$. 
In the case $d=3$, we either have $S^1 \times S^0$ (Morse index 1 or 2)
or the empty graph $0$ (maxima or minima, Morse index 3 or 0) and the index $j(x)$ is always zero. 
For $d=4$, the center manifold can be $S^1 \times S^1$ (Reeb 2-torus in a 3-sphere $S(x)$) or 
$S^0 \times S^2$ or $0$. The index can be negative for $S^0 \times S^2$ only. 
Otherwise it can be $1$.  Having the center manifold to be connected (a $2$-torus in $d=4$ 
is certainly a new phenomenon which starts to appear in dimension $4$. For non-Morse function, 
still in 4 dimension, the center manifold can be a pretty arbitrary orientable 2-surface.

\begin{figure}[!htpb]
\scalebox{0.57}{\includegraphics{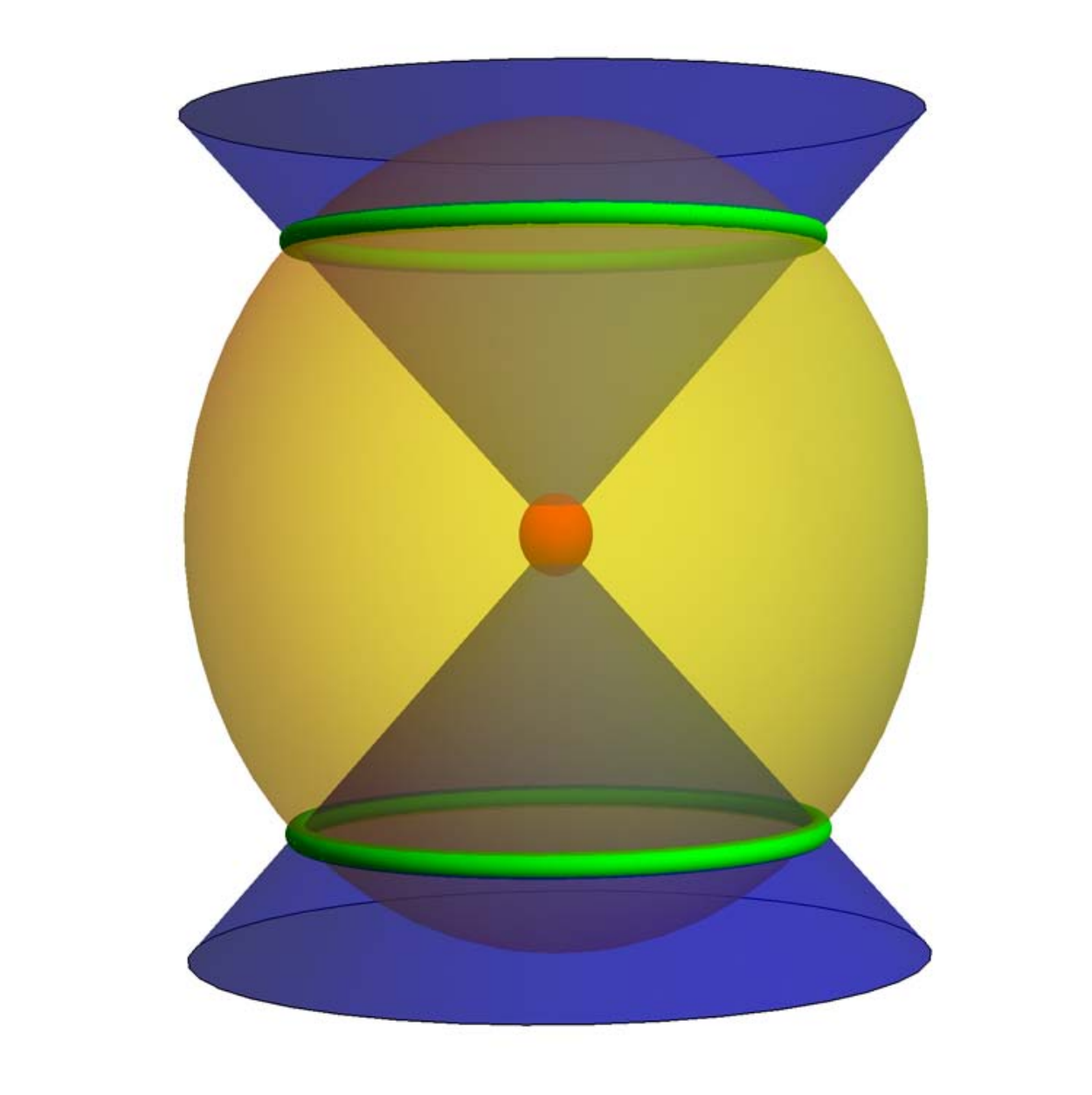}}
\label{hyperbolic}
\caption{
The center manifold $B_f(x)$ of the function $f(x,y,z)=x^2+y^2-z^2$ in $\mathbb{R}^3$ is
the intersection of the cone $\{ f = f(0,0,0) \}$ with a small sphere $S_r(x)$. In the 
Morse case, this is the product $S^0 \times S^1$ of two spheres. 
}
\end{figure}

\paragraph{}
The Reeb theorem implies that every $d$-sphere admits a Morse function. We have not
proven yet that any $d$-graph admits a Morse function. Actually, it would not surprise if this
would turn out to be false for some d-graphs $G$. The fact that 
Morse functions exist after a few Barycentric refinements $G$ follow from the 
classical result that there, Morse functions are generic. 
But also this result on Barycentric refinement should first be proven within 
a discrete frame work. 

\paragraph{}
One can call the Cartesian product of two spheres a {\bf generalized torus}.
In general, the center manifold can be an arbitrary $(d-2)$-graph, as long as it
can be embedded as a level surface $\{ f=c \}$ in the $(d-1)$-sphere $S(x)$.
To realize a particular manifold, we can build a sufficiently fine triangulation of
a $(d-1)$ ball which is embeddable into $\mathbb{R}^{d-1}$ then take the function $f$
and build the graph.

\paragraph{}
The Reeb sphere theorem already comes with some subtlety in the continuum: 
if a differentiable $d$-manifold admits
a smooth function with exactly two non-degenerate critical points
then it is {\bf homeomorphic} to a $d$-sphere by the Reeb sphere theorem.
But it is not necessarily {\bf diffeomorphic} to a $d$-sphere as exotic sphere 
constructions shows. It is actually vexing that we {\bf do not seem to know of any 
differentiable manifold homeomorphic to a $d$-sphere for which the minimal number
of critical points is larger than two.} The smooth Poincar\'e conjecture would 
imply that none exist in dimension 4. The Milnor examples have two critical points. 

\paragraph{}
Also some kind of converse to Reeb is true (although trivial): 
a compact differentiable manifold {\bf diffeomorphic} to a sphere admits
such a function but this is not a converse as that would require the assumption to be 
only homeomorphic, which is not true. Even a sphere homeomorphic but not diffeomorphic to 
the standard sphere can admit such a Morse function (as Milnor has shown in 1956). So, the admission
of a Morse function with this property does not imply the manifold to be diffeomorphic to the 
standard sphere. The story in the continuum is still not settled as one does not know for example
whether exotic spheres in dimension 4 exist and the {\bf smooth Poincar\'e conjecture} claims 
that none do exist. Also no exotic spheres in dimension 5 or 6 are known. 

\paragraph{}
For a $d$-graph and a regular point, $B_f(x)$ always is a $(d-2)$-sphere.
For odd $d$ and a $d$-graph the symmetric index is $-\chi(B_f(x))$ and always zero,
immediately establishing again that the Euler characteristic of such graphs is zero.
Also other Dehn-Sommerville relations immediately can be bootstrapped 
from lower dimension to higher dimensions.
For $4$-graphs, the center manifolds $B_f(x)$ are always $2$-graphs,
discrete two-dimensional surfaces. Any probability
measure on locally injective functions produces a curvature $\kappa = {\rm E}[j_f]$, curvature at a point
is an expectation $1-\chi(B_f(x))/2$ of random surfaces $B_f(x)$. For $4$-graphs in particular
we have at every point a collection of random $2$-graphs $B_f(x)$ and the expectation of $1-\chi(B_f(x))/2$
is the curvature at the point. At a positive curvature point, there are positive genus surfaces $B_f(x)$
while at a negative curvature point, $B_f(x)$ consists mostly of disconnected $2$-spheres.

\paragraph{}
Here is more about specific lower dimensional cases. 
In the case $d=0$, a $d$-graph $G$ is a discrete set of points. Any function $f$ is
a coloring and every point is a critical point. The theorem tells that $G$ is a sphere
if it contains exactly $2$ points.
In the case $d=1$, a $d$-graph $G$ is a finite collection of circular graphs. Every
function $f$ on $G$ has on each connected component exactly two critical points. In order
to have two critical points we have to have one circular graph.
In the case $d=2$, the condition to have no other critical point than the
maximum or minimum is that at every point, the circular graph $S(x)$ is split by $f=f(x)$
into two parts, one where $f<0$ and one where $f>0$. These two parts are linear path graphs.
At a critical point, the center manifold $B_f(x) = \{ f=f(x)\}$ is either empty, or
then consists of $2k$ points for $k \geq 1$.
In the case $d=3$, critical points can again be maxima or minima
with $B_f(x)=\{ f=f(x) \} = S^2 \times S^{-1}$ being empty (Morse index $0$ or $3$)
or then $\{ f=f(x)\}$ is a union of $2$ or more circles which in the Morse case is
$S^1 \times S^0$ or empty (which is Morse index $1$ or $2$).
In the case $d=4$, at a regular point, the center manifold is $S^2$. At a critical point, 
the center manifold $B_f(x) = \{ f=f(x) \}$ can in principle be any orientable 2-graph 
(non-orientable $2$-graphs like the projective plane or the Klein bottle 
are not embeddable in a 3-sphere).  The type of critical points is determined by the genus
of $B_f(x)$. The only Morse cases are when $B_f(x)$ is empty $0$
(which happens at maxima = Morse index 4 or minima Morse index 0)
or then where $B_f(x)$ is the 2-torus $\mathbb{T}^2 = S^1 \times S^1$ (this is Morse index 2 and can
be seen as a {\bf Reeb torus} in the 3-sphere) or two copies of the $2$-sphere 
$S^2 \times S^0$ or $S^0 \times S^2$ (which are the cases with Morse index $1$ or $3$).

\paragraph{}
Some intuition which comes from the continuum can fail in the discrete:
it is not true in general that if $B$ is a ball in a $d$-graph and $x$ is a boundary
point then $B \cup B(x)$ is a ball. The reason is that there can be edges $(a,b)$
connecting points of $B$ which are not in $B$. Adding a ball $B(x)$ now can produce
bridges, connect different parts of $B$ and changing the topology. The statement is
however true in a Barycentric refinement, if $B$ is a ball which is of the form $f \leq c$.
The reason is that if we have an edge $(a,b)$ for which $f(a)<c, f(b)<c$,
then also the edge is included in $\{ f < c\}$.
However, if $B$ is a $d$-ball in a $d$-graph $G$ and $B = \{ f \leq c \}$ and $x$ is near
a boundary point and $f(x)$ is modified to be $<c$, then $\{ f \leq c\}$ is still a $d$-ball.

\paragraph{}
Related to the previous remark, it is not true that we can build up a d-ball $G$
as a sequence of $d$-balls $G_1=B(x_1) = G_2 - x_2$ etc where $G_{n-1}=G-x_{n-1}$. The simplest example is
a union of two 2-balls $G=B(a) \cup B(b)$ which are glued along a boundary edge. Now $B(a)=G_1$ is a
ball but there is no increasing sequence of balls which lead to $G_n=G$. 
This is related to the fact that for $d \geq 2$ that if $B$ is a $d$-ball different from a unit ball, that we can
take away a vertex and still have a ball. A counter example in the case $d=2$ are two wheel graphs glued
together at a common boundary edge. The set of interior points of $B$ is now disconnected
and taking away any boundary point kills the ball property as the unit sphere from some
boundary points will stop being a $1$-ball as it has become a simplex. 

\paragraph{}
It is also not true that if $B$ is a subgraph of a $d$-graph generated by $\{x \in V \; f(x)<c \}$ 
is contractible that $B$ must be a ball. The interior of a ball does not have to be a ball. 
In any dimension $d$, the graph $B=\{ f<c\}$ can be $0$-dimensional for example or 
be one-dimensional like a tree. But $\{ f \leq c \}$,  the set $V$ of simplices
on which $f<c$ or for which $f$ changes sign equipped with bounds
$E=\{ (a,b) \; a \neq b, a \subset b \; {\rm or} \; b \subset a\}$
is a subgraph of the Barycentric refinement $G'$ of $G$ and is itself a
$d$-graph with boundary.

\paragraph{}
All these difficulties for not-Barycentric refined $d-$graphs 
can be overcome with edge refinements. In the context of the 4-color theorem,
we need to build up a 3-ball along an increasing sequence of 3-balls $B_n$ starting from a unit ball
but we are allowed to do edge-refinements at edges, but only at edges which are
already included in the growing ball $B_n$. 
We can there not just look at the Barycentric refinement. In any case, coloring problems are trivial for 
graphs which are Barycentric refinements $G'$ of a d-graph $G$ as the chromatic number of $G$ is always
$(d+1)$. [The dimension function $f={\rm dim}$ is the minimal coloring and actually is a Morse function for
which the index $i_f(x)$ is $(-1)^{{\rm dim}(x)}$ and for which the Poincar\'e-Hopf theorem establishes
that the Barycentric refinement has the same Euler characteristic.]
So, the construction of a nice Morse filtration $B_k$ with edge refinements done 
is a bit more involved. The Reeb theorem story illustrates this ongoing work.

\bibliographystyle{plain}

\begin{thebibliography}{10}

\bibitem{Evako2013}
A.V. Evako.
\newblock The {Jordan-Brouwer} theorem for the digital normal n-space space
  {$Z^n$}.
\newblock http://arxiv.org/abs/1302.5342, 2013.

\bibitem{forman95}
R.~Forman.
\newblock A discrete {M}orse theory for cell complexes.
\newblock In {\em Geometry, topology, and physics}, Conf. Proc. Lecture Notes
  Geom. Topology, IV, pages 112--125. Int. Press, Cambridge, MA, 1995.

\bibitem{Forman2002}
R.~Forman.
\newblock A user's guide to discrete morse theory.
\newblock {\em {S\'e}minaire {L}otharingien de {C}ombinatoire}, 48, 2002.

\bibitem{josellisknill}
F.~Josellis and O.~Knill.
\newblock A {L}usternik-{S}chnirelmann theorem for graphs.
\newblock \\http://arxiv.org/abs/1211.0750, 2012.

\bibitem{cherngaussbonnet}
O.~Knill.
\newblock A graph theoretical {Gauss-Bonnet-Chern} theorem.
\newblock {\\}http://arxiv.org/abs/1111.5395, 2011.

\bibitem{elemente11}
O.~Knill.
\newblock A discrete {Gauss-Bonnet} type theorem.
\newblock {\em Elemente der Mathematik}, 67:1--17, 2012.

\bibitem{poincarehopf}
O.~Knill.
\newblock A graph theoretical {Poincar\'e-Hopf} theorem.
\newblock {\\} http://arxiv.org/abs/1201.1162, 2012.

\bibitem{indexformula}
O.~Knill.
\newblock An index formula for simple graphs \hfill.
\newblock {\\}http://arxiv.org/abs/1205.0306, 2012.

\bibitem{indexexpectation}
O.~Knill.
\newblock On index expectation and curvature for networks.
\newblock {\\}http://arxiv.org/abs/1202.4514, 2012.

\bibitem{eveneuler}
O.~Knill.
\newblock The {E}uler characteristic of an even-dimensional graph.
\newblock {{\\}http://arxiv.org/abs/1307.3809}, 2013.

\bibitem{KnillFunctional}
O.~Knill.
\newblock Characteristic length and clustering.
\newblock {{\\}http://arxiv.org/abs/1410.3173}, 2014.

\bibitem{knillgraphcoloring}
O.~Knill.
\newblock Coloring graphs using topology.
\newblock {{\\}http://arxiv.org/abs/1410.3173}, 2014.

\bibitem{knillgraphcoloring2}
O.~Knill.
\newblock Graphs with {E}ulerian unit spheres.
\newblock {{\\}http://arxiv.org/abs/1501.03116}, 2015.

\bibitem{KnillJordan}
O.~Knill.
\newblock The {J}ordan-{B}rouwer theorem for graphs.
\newblock {{\\}http://arxiv.org/abs/1506.06440}, 2015.

\bibitem{KnillSard}
O.~Knill.
\newblock A {S}ard theorem for graph theory.
\newblock {{\\}http://arxiv.org/abs/1508.05657}, 2015.

\bibitem{KnillBarycentric2}
O.~Knill.
\newblock Universality for {B}arycentric subdivision.
\newblock {{\\}http://arxiv.org/abs/1509.06092}, 2015.

\bibitem{valuation}
O.~Knill.
\newblock Gauss-{B}onnet for multi-linear valuations.
\newblock {\\}http://arxiv.org/abs/1601.04533, 2016.

\bibitem{AmazingWorld}
O.~Knill.
\newblock The amazing world of simplicial complexes.
\newblock {{\\}https://arxiv.org/abs/1804.08211}, 2018.

\bibitem{CohomologyWuCharacteristic}
O.~Knill.
\newblock The cohomology for {W}u characteristics.
\newblock {\\}https://arxiv.org/abs/1803.1803.067884, 2018.

\bibitem{knillgraphcoloring3}
O.~Knill.
\newblock Eulerian edge refinements, geodesics, billiards and sphere coloring.
\newblock {{\\}https://arxiv.org/abs/1808.07207}, 2018.

\bibitem{Matsumoto}
Y.~Matsumoto.
\newblock {\em An introduction to Morse theory}.
\newblock AMS, 2002.

\bibitem{McAuley}
L.F. McAuley.
\newblock A topological reeb-milnor-rosen theorem and characterizations of
  manifolds.
\newblock {\em Bulletin of the AMS}, 78, 1972.

\bibitem{Mil63}
J.~Milnor.
\newblock {\em Morse theory}, volume~51 of {\em Annals of Mathematics Studies}.
\newblock Princeton University press, Princeton, New Jersey, 1963.

\bibitem{Reeb1952}
G.~Reeb.
\newblock Sur certaines proprietes topologiques des varieties feuilletes.
\newblock {\em Publ.Inst.Math.Univ.Strasbourg}, pages 91--154, 1952.

\bibitem{Wu1953}
Wu~W-T.
\newblock Topological invariants of new type of finite polyhedrons.
\newblock {\em Acta Math. Sinica}, 3:261--290, 1953.

\bibitem{Weyl1925}
H.~Weyl.
\newblock {\em {R}iemanns geometrische {I}deen, ihre {A}uswirkung und ihre
  {V}erkn{\"u}pfung mit der {G}ruppentheorie}.
\newblock Springer Verlag, 1925, republished 1988.

\end{thebibliography}

\end{document}